\documentclass[reqno]{amsart}

\usepackage{geometry}
\usepackage{dsfont}
\usepackage{MnSymbol}
\usepackage{color}
\usepackage{todonotes}
\usepackage{verbatim}

\usepackage{graphicx}
\usepackage{subfigure}
\usepackage[all]{xy}
\usepackage{tikz,pgfplots}
\usetikzlibrary{calc}
\usetikzlibrary{arrows,petri, topaths}
\usepackage{tkz-berge}
\usepackage{caption}
\usepackage[foot]{amsaddr}
\usepackage{listings}

\definecolor{colKeys}{rgb}{0,0,1} 
\definecolor{colIdentifier}{rgb}{0,0,0} 
\definecolor{colComments}{rgb}{0,1,0.3} 
\definecolor{colString}{rgb}{0,0.5,0} 

\definecolor{dkgreen}{rgb}{0,0.6,0} 
\definecolor{gray}{rgb}{0.5,0.5,0.5} 
\definecolor{lightgray}{rgb}{0.9,0.9,0.9} 

\lstset{language=Perl, 
   keywords={break,case,catch,continue,else,elseif,end,for,function, 
   global,if,otherwise,persistent,return,switch,try,while,ones,zeros}, 
   float=hbp, 
   basicstyle=\ttfamily\scriptsize, 
   identifierstyle=\color{colIdentifier}, 
   keywordstyle=\color{blue}, 
   commentstyle=\color{dkgreen}, 
   stringstyle=\color{cyan}, 
   columns=fixed, 
   tabsize=2, 
   frame=single, 
   numbers=none, 
   extendedchars=false, 
   numberstyle=\ttfamily\tiny, 
   stepnumber=1, 
   numbersep=5pt, 
   showspaces=false, 
   showstringspaces=false, 
   breakautoindent=true,
   xleftmargin=1mm,
   xrightmargin=1mm,
   backgroundcolor=\color{white},
   lineskip={0pt},
   inputencoding=utf8,
   extendedchars=false,
   escapeinside=``
}

\title[A variational scheme for Fokker-Planck equations]
{A fully discrete variational scheme for solving nonlinear Fokker-Planck equations in higher space dimensions}

\author{Oliver Junge and Daniel Matthes and Horst Osberger}
\address{Zentrum Mathematik \\ TU M\"unchen \\ Boltzmannstr. 3 \\ D-85748 Garching \\ Germany}
\email{oj@tum.de, matthes@ma.tum.de, osberger@ma.tum.de}

\thanks{This research was supported by the DFG Collaborative Research Center, TRR 109, ``Discretization in Geometry and Dynamics.''}

\newcommand{\setR}{\mathbb{R}}
\newcommand{\setRp}{\mathbb{R}_{\geq0}}
\newcommand{\setRpp}{\mathbb{R}_{>0}}
\newcommand{\setN}{\mathbb{N}}

\newcommand{\dd}{\,\mathrm{d}}
\newcommand{\dn}{\mathrm{d}}
\newcommand{\id}{\operatorname{id}}
\newcommand{\eins}{\mathds{1}}
\newcommand{\nrml}{\mathbf{n}}
\newcommand{\ball}{\mathbb{B}}
\newcommand{\bigO}{\mathcal{O}}
\newcommand{\rf}{\text{ref}}
\newcommand{\sphere}{\mathbb{S}}

\newcommand{\dff}{\mathrm{D}}

\renewcommand{\div}{\operatorname{div}}
\renewcommand{\det}{\operatorname{det}}

\newcommand{\argmin}{\operatorname*{arg\,min}}
\newcommand{\tr}{\operatorname{tr}}

\newcommand{\proj}{\Pi}
\newcommand{\isom}[2]{\mathbf{J}_{#1}\left[#2\right]}

\newcommand{\wass}{\mathcal{W}_2}
\newcommand{\tm}{{\mathfrak{T}}}
\newcommand{\T}{{\mathfrak{t}}}
\newcommand{\dens}{\mathcal{P}(\Omega)}
\newcommand{\uelo}{\mathbf{v}}
\newcommand{\velo}{\mathbf{v}}
\newcommand{\rfd}{\bar u}

\newcommand{\prss}{\mathrm{P}}
\newcommand{\anrj}{\mathbb{E}}
\newcommand{\nrjx}{\mathbf{E}}

\newcommand{\xspc}{\mathfrak{X}}
\newcommand{\txspc}{\mathrm{T}\mathfrak{X}}
\newcommand{\maps}{\mathrm{Diff}^+}
\newcommand{\theh}{\delta}
\newcommand{\basis}{\mathfrak{B}}
\newcommand{\bas}{\mathfrak{b}}
\newcommand{\basQ}{\mathfrak{q}}

\newcommand{\intom}{\int_{\Omega}}
\newcommand{\I}{\mathcal{I}}
\newcommand{\ki}{\mathbf{k}}
\newcommand{\li}{\mathbf{\ell}}

\newtheorem{thm}{Theorem}
\newtheorem{prp}[thm]{Proposition}
\newtheorem{lem}[thm]{Lemma}
\newtheorem{cor}[thm]{Corollary}
\newtheorem{rmk}[thm]{Remark}


\providecommand{\gridplot}[1]{
	\begin{tikzpicture}
	\tikzstyle{every node}=[font=\footnotesize]
    \begin{axis}[enlargelimits=false,
    	axis on top,
		width=0.29\textwidth,
		height=0.29\textwidth,
		xlabel={$x$},xlabel style={at={(0.5,0)}},
		ylabel={$y$},ylabel style={at={(0,0.5)}},
		xtick={0,1},ytick={0,1},
		xticklabels={0,1},yticklabels={\empty,1}]
        \addplot graphics [xmin=0,xmax=1,ymin=0,ymax=1]{#1};
    \end{axis}
	\end{tikzpicture}
}

\providecommand{\densplot}[1]{
	\begin{tikzpicture}
	\tikzstyle{every node}=[font=\footnotesize]
	\begin{axis}[enlargelimits=false,
		grid=both,
		width=0.29\textwidth,
		height=0.29\textwidth,
		minor tick num=1,
		xlabel={$y$},xlabel style={at={(0.1,0.1)}},
		ylabel={$x$},ylabel style={at={(0.75,0.05)}},
		xtick={0,1}, ytick={0,1}, ztick={0,1,2,3,4,5},
		xticklabels={0,1},yticklabels={\empty,1},
		zmin=0, zmax=5,
		xmin=0, xmax=1,
		ymin=0, ymax=1
		]
	\addplot3 graphics[
		points={
			(0,1,0.33) => (21.7,107.9-73) 
			(0.53,0.27,2.1) => (84.5,107.9-54)
			(1,0,0.81) => (125.9,107.9-73.6)
			(0,0,0.33) => (59.8,107.9-91.9)
		},zmin=0] {#1};
	\end{axis}
	\end{tikzpicture}
}

\begin{document}

\begin{abstract}
  We introduce a novel spatio-temporal discretization for nonlinear Fokker-Planck equations 
  on the multi-dimensional unit cube.
  This discretization is based on two structural properties of these equations:
  the first is the representation as a gradient flow of an entropy functional in the $L^2$-Wasserstein metric,
  the second is the Lagrangian nature, 
  meaning that solutions can be written as the push forward transformation of the initial density under suitable flow maps.
  The resulting numerical scheme is entropy diminishing and mass conserving. 
  Further, the scheme is weakly stable, which allows us to prove convergence under certain regularity assumptions.
  Finally, we present results from numerical experiments in space dimension $d=2$.
\end{abstract}

\maketitle

\section{Introduction}\label{sec:intro}
%
In this paper, we propose and study spatio-temporal discrete approximations for solutions 
to the following initial boundary value problem for a non-linear Fokker-Planck equation:
\begin{align}
  \label{eq:eq}
  \partial_t u = \Delta\prss(u) + \div(u\nabla V) &\quad\text{for $(t;x)\in\setRpp\times\Omega$},\\ 
  \label{eq:bc}
  \nrml\cdot\nabla\prss(u) = 0 &\quad\text{for $(t;x)\in\setRpp\times\partial\Omega$},\\ 
  \label{eq:ic}
  u(0;x)=u^0(x)\ge0 &\quad\text{for $x\in\Omega$}. 
\end{align}
The domain $\Omega=(0,1)^d$ is the $d$-dimensional unit cube,  
$\prss:\setRp\to\setR$ is the pressure function,
and $V:\Omega\to\setR$ is a smooth external potential.
We assume that $\prss$ is continuous, is smooth on $\setRpp$, and is strictly increasing.
Typical choices for the pressure function are $\prss(u)=u$ (heat equation),
and $\prss(u)=u^m$ with some exponent $m>1$ (porous medium equation) or $0<m<1$ (fast diffusion equation).
To keep technicalities to a minimum, we shall impose some further restrictions on $\prss$, $V$ and $u^0$ later,
see Section \ref{sct:techhypos}.

\subsection{Concept of the discretization}
The discretization approach presented here has two main features: 
first, it is variational, and second, it is fully Lagrangian.

Let us start with a discussion of the \textbf{Lagrangian structure} of the evolution:
consider the Fokker-Planck equation \eqref{eq:eq} 
as a transport equation for the density $u$,
\begin{align}
  \label{eq:trapo}
  \partial_tu = -\div\big(u\,\uelo[u]\big),
\end{align}
with a vector field $\uelo[u]:\Omega\to\setR^d$ that depends sensitively on $u$,
namely
\begin{align}
  \label{eq:velo}
  \uelo[u] = -\nabla\big[h'(u) + V\big],
\end{align}
where $h:\setRp\to\setR$ is implicitly defined (up to an affine normalization) by $\prss'(r)=rh''(r)$ for all $r>0$. 
In view of the boundary conditions \eqref{eq:bc}, 
this representation implies conservation of non-negativity and total mass $M$ of the solutions $t\mapsto u_t=u(t;\;\cdot\;)$, two key properties that are inherited by our discretization.

Moreover, the transport formulation \eqref{eq:trapo} implies 
that the solution $u_t$ at any time $t\ge0$ can be written 
as the push-forward of an a priori fixed reference density $\rfd$ with total mass $M$ on $\Omega$ 
under a suitable diffeomorphism $\tm_t:\Omega\to\Omega$,
i.e.,
\begin{align}
  \label{eq:pushit}
  u_t = (\tm_t)_\# \rfd = \frac{\rfd}{\det\dff\tm_t}\circ(\tm_t)^{-1}.
\end{align}
Canonical choices for $\rfd$ are either the unit density on $\Omega$, or the initial condition $u^0$.
The $\tm_t$ are far from unique, 
but there is a distinguished family $(\tm_t)_{t\ge0}$,
namely the one obtained as flow maps for the ordinary differential equation $\dot x=\uelo[u_t](x)$.
In other words, one chooses the maps $\tm_t$ such that 
\begin{align}
  \label{eq:tmfromu}
  \partial_t\tm_t=\uelo[u_t]\circ\tm_t,  
\end{align}
which determines the $\tm_t$ uniquely up to a global change of variables that is given by the initial map $\tm_0$.
The latter is arbitrary but subject to the consistency condition $(\tm_0)_\#\rfd=u^0$.

For the Lagrangian discretization, we approximate the evolution of the maps $\tm_t$ in a numerically tractable form.
Our basic idea here is \emph{not} to restrict the diffeomorphisms $\tm_t$ themselves to an ansatz space,
but their time increments. 
More specifically, given a time step width $\tau>0$, 
our ansatz for the approximation $\tm_\Delta^n$ of the map $\tm_t$ at time $t=n\tau$
is the concatenation of $n$ close-to-identity diffeomorphisms $\T_\Delta^1,\ldots,\T_\Delta^n$:
\begin{align}
  \label{eq:concat}
  \tm_\Delta^n = \T_\Delta^n\circ\T_\Delta^{n-1}\circ\cdots\circ\T_\Delta^1\circ \tm_\Delta^0.
\end{align}
The $\T_\Delta^m$ in \eqref{eq:concat} are inductively determined inside a fixed finite-dimensional manifold of diffeomorphisms.
Therefore, each $\T_\Delta^m$ has a simple form, but the complexity of $\tm_\Delta^n$ increases rapidly with $n$.

For our definition of the recursion relation of the incremental diffeomorphisms $\T_\Delta^m$,
we build on the \textbf{variational structure} of \eqref{eq:eq}. 
Observe that the evolution \eqref{eq:tmfromu} for the diffeomorphisms $\tm_t$ is actually autonomous,
since one can substitute $u_t=(\tm_t)_\#\rfd$:
\begin{align}
  \label{eq:evans}
  \partial_t\tm_t = \uelo\big[(\tm_t)_\#\rfd\big]\circ\tm_t.
\end{align}
Now, it turns out that \eqref{eq:evans} is a \emph{gradient flow} 
on the manifold of diffeomorphisms on $\Omega$ with respect to the $L^2(\rfd)$-metric;
the respective potential $\nrjx$ is given in \eqref{eq:newpot} below.
This structure is not a coincidence, but is a consequence of the fact that the original equation \eqref{eq:eq}
is a gradient flow of an appropriate entropy functional $\anrj$ with respect to the $L^2$-Wasserstein metric;
these relations are detailed in Section \ref{sct:gf} below.

The variational discretization of this gradient flow is now performed using minimizing movements,
which are a variational formulation of the implicit Euler scheme.
Given the diffeomorphism $\tm_\Delta^{n-1}$ from the previous step,
the next diffeomorphism $\tm_\Delta^n=\T_\Delta^n\circ\tm_\Delta^{n-1}$ is obtained by choosing the increment $\T_\Delta^n$
as the minimizer of the functional
\begin{align}
  \label{eq:nmm}
  \T\mapsto\frac1{2\tau}\big\|\T\circ\tm_\Delta^{n-1}-\tm_\Delta^{n-1}\big\|_{L^2(\rfd)}^2
  + \nrjx\big(\T\circ\tm_\Delta^{n-1}\big)
\end{align}
over our finite-dimensional set of diffeomorphisms.
Thanks to its variational nature,
this particular discretization approach has --- at least --- two valuable analytical properties.
The first, obvious one is the monotonicity of the entropy $\nrjx$ resp.\ $\anrj$ in discrete time.
The second, more subtle one is the strict convexity of the minimization problem \eqref{eq:nmm},
see Proposition \ref{prp:convex} for details.
This guarantees not only uniqueness of the minimizer, but also efficient numerical solvability.
Convexity of \eqref{eq:nmm} is also the key ingredient for our stability analysis.

\subsection{Related discretizations from the literature}
The general idea to perform a fully Lagrangian discretization for problems of the type \eqref{eq:eq}--\eqref{eq:ic} 
dates back at least to the beautiful but apparently almost unknown paper by Russo \cite{russo}.
In fact, there is an earlier short communication by MacCamy and Socolovsky \cite{maccamy} 
in which a similar idea for use in one space dimension is sketched.
Later, Budd et al \cite{budd} used a variant of that discretization under the name ``moving mesh method'' 
to track asymptotically self-similar solutions of the porous medium equation 
--- which is \eqref{eq:eq} with $\prss(u)=u^m$ ($m>1$) and $V\equiv0$ --- in one space dimension numerically.
None of these schemes has been variational, i.e., the gradient flow structure did not play any role.

The first use of the interrelation between the Lagrangian and the gradient flow character 
for numerical purposes is apparently due to Gosse and Toscani \cite{GosTos}.
The method developed there, however, was limited to one space dimension,
where the Wasserstein metric between probability measures on the line
is isometrically equivalent to the $L^2$-distance between the corresponding inverse distribution functions;
the higher-dimensional situation is much more complicated, as discussed above.
Subsequently, the one-dimensional method has been refined and then applied to various further evolution equations,
like the Keller-Segel model \cite{blanchet}, 
the Hele-Shaw flow \cite{naldi}, 
the quantum drift diffusion equation \cite{bertram}, 
the $p$-Laplace equation \cite{agueh},
and even to the isentropic Euler equations \cite{westdickenberg}.
For some of those schemes, 
the discrete-to-continuous limit has been rigorously established by the second and the third author \cite{dde,dlss}.

In contrast, the first attempts to combine a fully Lagrangian discretization with a variational time integrator 
for solution of the \emph{multi-dimensional} problem \eqref{eq:eq}--\eqref{eq:ic} are only quite recent.
In fact, we are aware of only one fully Lagrangian scheme for solution of non-linear Fokker-Planck equations
that also respects the variational structure of the underlying gradient flow: 
this is the work by Carlier et al \cite{carlier}.
We emphasize that although their discretization approach has several similarities to our own
--- e.g., it produces a sequence of strictly convex minimization problems ---
the way they exploit the Lagrangian structure is conceptually different from 
the idea to iterate close-to-identity diffeomorphisms from a given ansatz space, as it is done here.
It should further be noted that some preliminary considerations in the same direction,
along with numerical experiments, have been made in the thesis of Roessler \cite{roessler}.

For completeness, we mention two further works that do not completely fit the context, but are still closely related:
first, Carrillo and Moll \cite{moll} developed a fully Lagrangian scheme for a non-local agregation equation; 
although that scheme is not truely variational, its definition is closely linked to the gradient flow structure.
Second, in a recent paper by Cavalletti et al \cite{westdickenberg2},
the isentropic Euler equations have been analyzed in the framework of Wasserstein gradient flows.
The construction used there can be interpreted as a fully Lagrangian variational solver.

\subsection{Plan of the paper}
The motivation and the details of our discretization approach are given in Section \ref{sct:discrete} below.
The rest of the paper is divided into two parts: 
an analytical one, in which we perform a rigorous study of well-posedness and consistency,
and a numerical one, in which we clarify practical aspects of the implementation and report results from experiments.
Our results in the analytical part are summarized in Section \ref{sct:results}.
Their respective proofs are given in Sections \ref{sct:proofs1} and \ref{sct:proofs2}.
Section \ref{sct:implement} is concerned with the implementation of the numerical scheme.
Numerical results are then given in Section \ref{sct:numerics}.

\section{Preliminaries}
\label{sct:discrete}
%
\subsection{Additional hypotheses and notations}
\label{sct:techhypos}
The class of non-negative densities with total mass $M$ is denoted by $\dens$.
The function $\basQ:\setR^d\to\setR$ is defined by 
\begin{align}
  \label{eq:basQ}
  \basQ(x)=\frac12|x|^2;
\end{align}
observe that $\nabla\basQ=\id$, the identity map on $\setR^d$.
For brevity, we introduce
\begin{align}
  \label{eq:vvelo}
  \velo(\T;\tm) := \uelo\big[(\T\circ\tm)_\#\rfd\big]\circ\T.
\end{align}
By $\maps$, 
we mean the class of (orientation-preserving) $C^1$-diffeomorphisms $\tm$ of $\overline\Omega$ onto itself that fix the corners.
Note that any $\tm\in\maps$ maps each of the $(d-1)$-dimensional boundaries of $\Omega$ onto itself,
that is, boundary points are only moved laterally.

\emph{Symmetry assumption:}
To avoid unnecessary technicalities, we restrict our considerations to solutions $u$ with a certain symmetry.
Specifically, we call a function $f:\Omega\to\setR$ \emph{symmetric-periodic}
if it admits a continuous extension $\tilde f:\setR^d\to\setR$ to the whole space
which is both even and $2$-periodic with respect to each coordinate.
We further say that $f$ is \emph{$C^k$-symmetric-periodic} if additionally $\tilde f\in C^k(\setR^d)$. 
For instance, any function of the form 
\[ f(x) = F(\cos \pi x_1,\ldots,\cos \pi x_d) \]
with a $C^k$-function $F:\setR^d\to\setR$ is $C^k$-symmetric-periodic.
Smooth symmetric-periodic solutions $u$ form an invariant class for the problem \eqref{eq:eq}--\eqref{eq:ic}
in the following sense:
assuming that both the potential $V$ and the initial datum $u^0$ are $C^\infty$-symmetric periodic,
it then follows by unique smooth solvability of \eqref{eq:eq}--\eqref{eq:ic}, see e.g. \cite{VazquezPME},
in combinaition with symmetry arguments that the corresponding solution $u_t$ is $C^\infty$-symmetric-periodic at any $t>0$.

\emph{Variational derivative and gradient:}
In order to avoid the use of the ambigous symbol $\delta F/\delta u$ for the variational derivative,
we introduce the following convention:
for a given functional $F$,
denote by $\dff F(u)[\varphi]$ the first variation of $F$ at $u$ in the direction of (the smooth function) $\varphi$.
Assuming that $\dff F(u)$ extends to a linear and continuous functional on $L^2(\theta)$, where $\theta$ is some given measure on $\Omega$,
let $\isom{L^2(\theta)}{\dff F(u)}\in L^2(\theta)$ be its Riesz-dual, i.e.,
\[ \intom \varphi(x)\left\{\isom{L^2(\theta)}{\dff F(u)}\right\}(x)\dd\theta(x) = \dff F(u)[\varphi]  \]
holds for all smooth $\varphi$.

\subsection{Analytical background --- two gradient flow structures, Lagrangian representation}
\label{sct:gf}
To motivate the variational discretization approach, 
we briefly discuss the gradient flow structure behind \eqref{eq:eq}--\eqref{eq:ic},
which is actually two-fold.


\subsubsection{First gradient flow structure}
One possibility to write \eqref{eq:eq}--\eqref{eq:ic} in variational form 
has been worked out in Otto's celebrated paper \cite{OttPME}.
Observe that $\uelo$ in \eqref{eq:velo} has the form  
\begin{align}
  \label{eq:W2flow}
  \uelo[u] = -\nabla\isom{L^2(\dn x)}{\dff\anrj(u)}, 
\end{align}
where $\anrj$ is the following \emph{relative entropy functional}
\begin{align}
  \label{eq:anrj_dde2D}
  \anrj(u) = \intom h(u(x))\dd x + \intom u(x)V(x)\dd x.
\end{align}
This makes solutions $t\mapsto u_t$ curves of steepest descent in the potential landscape of $\anrj$
on the space $\dens$ of densities with respect to the $L^2$-Wasserstein metric $\wass$.
We refer the reader to \cite{AGS,VilBook} for an introduction to Wasserstein metrics and associated gradient flows.

The main analytical insight from this gradient flow representation is that
$\anrj$ is a $\lambda$-convex functional in the metric $\wass$ \cite{McCann}.
More precisely, it is $\lambda$-convex along generalized geodesics in the sense of \cite{AGS}.
Here $\lambda\in\setR$ coincides with the modulus of convexity of $V$, i.e., $\nabla^2V\ge\lambda\eins$.
This makes qualitative features of the non-linear Fokker-Planck equation \eqref{eq:eq} 
accessible to the abstract theory of $\lambda$-contractive gradient flows,
with important consequences on the large-time behavior \cite{OttPME}.

\subsubsection{Second gradient flow structure}
An alternative variational formulation directly for the transport representation \eqref{eq:trapo}\&\eqref{eq:velo}
was given by Evans et al in \cite{Evans}, see also \cite{ALS,CarLis}.
Indeed, it turns out that \eqref{eq:evans} is again a gradient flow,
namely on the space of diffeormorphisms on $\Omega$
with respect to the Hilbertian structure induced by $L^2(\rfd)$.
That is,
\begin{align}
  \label{eq:L2flow}
  \uelo\big[\tm_\#\rfd\big]\circ\tm = -\isom{L^2(\rfd)}{\dff\nrjx(\tm)},
\end{align}
for the potential $\nrjx$ (on diffeomorphisms $\tm$) 
that is induced by the relative entropy $\anrj$ above:
\begin{align}
  \label{eq:newpot}
  \nrjx(\tm) = \anrj(\tm_\#\rfd)
  = \intom \left[h_*\left(\frac{\det\dff\tm}{\rfd}\right)+V\circ\tm\right]\dd\rfd .
\end{align}
The function $h_*:\setRpp\to\setR$ above is obtained from $h$ via $h_*(s)=sh(1/s)$.
Equality of the vector fields $\uelo$ in \eqref{eq:W2flow} and in \eqref{eq:L2flow} is verified by a short calculation,
see Lemma \ref{lem:2equivalent}.
%
\begin{rmk}
  In \cite{Evans}, the following more explicit representation of $\uelo[\tm_\#\rfd]$ has been derived:
  \begin{align}
    \label{eq:evans2}
    \uelo\big[\tm_\#\rfd\big]\circ\tm
    = - \frac1{\rfd}\div\left[\prss\left(\frac{\rfd}{\det \dff\tm}\right)\,(\dff\tm)^\#\right] - (\nabla V) \circ\tm.
  \end{align}
  Here the divergence $\div$ acts on the entries in each row of the matrix (yielding a column vector), 
  and $A^\#$ denotes the cofactor matrix of $A$, that is $(\det A)(A^{-1})^T=A^\#$.
  The calculations leading to \eqref{eq:evans2} are given in the proof of Lemma \ref{lem:el}.
\end{rmk}
We remark that, despite the fact that $\anrj$ is $\lambda$-convex with respect to the $L^2$-Wasserstein metric $\wass$,
the functional $\nrjx$ is poly-convex, but has no other useful convexity properties with respect to $L^2(\rfd)$.

\subsubsection{Comparison of the two structures}
Although both gradient flows represent the same Fokker-Planck equation,
they have different analytical features.
The first (``Eulerian'') approach is set on the nice space of positive densities, 
with a $\lambda$-convex functional,
but needs the complicated Wasserstein metric.
The second (``Lagrangian'') approach uses the convenient $L^2$-metric, 
but on the more complicated space of diffeomorphisms, 
and lies outside of the setting of contractive flows.
For our discretization purposes, we rely on the second formulation,
but we modify the variational problem ``in the direction of the Eulerian approach'' 
in order to profit from the convexity there.
This is made precise in Proposition \ref{prp:convex}.

\subsubsection{Lagrangian representation of the solution}

The following Lemma shows how to construct an adapted family of push-forward maps 
for a Lagrangian representation of a solution to the PDE problem \eqref{eq:eq}--\eqref{eq:ic}.
It is essentially an adaptation of the result from \cite[Section 3.2]{Evans} to our situation.
\begin{lem}
  \label{lem:Tfromu}
  Let $u:[0,\tau]\times\Omega\to\setRpp$ be a smooth symmetric-periodic classical solution to \eqref{eq:eq}.
  Define $\T:[0,\tau]\times\Omega\to\setR^d$ as the unique solution to the initial value problem
  \begin{align}
    \label{eq:Tfromu}
    \partial_\sigma\T_\sigma = \uelo[u_\sigma]\circ\T_\sigma, \quad \T_0=\id.
  \end{align}
  Then $u_\sigma=(\T_\sigma)_\#u_0$, 
  and consequently, $\T_{(\cdot)}$ is a solution to $\partial_\sigma\T_\sigma=\velo(\T_\sigma;\tm)$,
  where $\tm\in\maps$ is arbitrary with $\tm_\#\rfd=u_0$.
\end{lem}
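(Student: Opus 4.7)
The plan is to reduce everything to a uniqueness statement for the linear continuity equation, once the flow $\T_\sigma$ has been properly set up as a diffeomorphism of $\overline\Omega$.

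First I would verify that the ODE in \eqref{eq:Tfromu} has a global classical solution on $[0,\tau]\times\overline\Omega$. Since $u$ is smooth and strictly positive, the vector field $\uelo[u_\sigma] = -\nabla\bigl[h'(u_\sigma)+V\bigr]$ is smooth on $[0,\tau]\times\overline\Omega$. The symmetric-periodicity of $u_\sigma$ and $V$ implies that $h'(u_\sigma)+V$ is symmetric-periodic as well, so its extension to $\setR^d$ is even and $2$-periodic in each coordinate; hence its gradient is odd in the normal direction at each face of $\partial\Omega$, which forces the normal component of $\uelo[u_\sigma]$ to vanish on $\partial\Omega$. Consequently the flow preserves $\overline\Omega$, moves boundary points only laterally, and fixes the corners. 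Smooth dependence on the initial point and invertibility of the flow then show $\T_\sigma\in\maps$ for every $\sigma\in[0,\tau]$.

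Next I would rewrite \eqref{eq:eq} as a linear continuity equation. Using $\prss'(r)=r h''(r)$, a direct computation gives
\begin{align*}
\Delta\prss(u)+\div(u\nabla V)
=\div\bigl(u\nabla h'(u)\bigr)+\div(u\nabla V)
=-\div\bigl(u\,\uelo[u]\bigr),
\end{align*}
so $u_\sigma$ solves $\partial_\sigma w_\sigma+\div(w_\sigma\,\uelo[u_\sigma])=0$ with $w_0=u_0$, where we now regard $\uelo[u_\sigma]$ as a \emph{given} smooth vector field on $[0,\tau]\times\overline\Omega$. On the other hand, by the very definition of push-forward under the flow of $\uelo[u_\sigma]$, the density $v_\sigma:=(\T_\sigma)_\#u_0$ satisfies the same linear continuity equation with the same initial datum $v_0=u_0$. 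Since the coefficient field is smooth and has vanishing normal component on $\partial\Omega$, classical uniqueness for the linear transport equation (by the method of characteristics applied to $\uelo[u_\sigma]$ itself) yields $u_\sigma=v_\sigma=(\T_\sigma)_\#u_0$ on $[0,\tau]$.

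Finally, for the second assertion, pick any $\tm\in\maps$ with $\tm_\#\rfd=u_0$ and use functoriality of the push-forward: $(\T_\sigma\circ\tm)_\#\rfd=(\T_\sigma)_\#(\tm_\#\rfd)=(\T_\sigma)_\#u_0=u_\sigma$. Substituting into \eqref{eq:vvelo} gives $\velo(\T_\sigma;\tm)=\uelo\bigl[(\T_\sigma\circ\tm)_\#\rfd\bigr]\circ\T_\sigma=\uelo[u_\sigma]\circ\T_\sigma=\partial_\sigma\T_\sigma$ by \eqref{eq:Tfromu}, which is the desired equation.

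The main obstacle is the boundary analysis in the first step: without the symmetric-periodic hypothesis, the vector field $\uelo[u_\sigma]$ would generally have a nonzero normal component on $\partial\Omega$, and the flow would leave $\Omega$, so that $\T_\sigma$ would fail to lie in $\maps$. The symmetric-periodic extension trick is what converts the Neumann condition \eqref{eq:bc} into the geometric statement that $\uelo[u_\sigma]$ is tangential to each face, allowing the entire argument to reduce to the standard correspondence between transport PDEs and ODE flows.
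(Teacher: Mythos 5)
Your proof is correct and shares the same underlying strategy as the paper's: freeze the vector field $\uelo[u_\sigma]$, observe that $u_\sigma$ and $(\T_\sigma)_\#u_0$ both solve the resulting \emph{linear} continuity equation with the same initial datum, and conclude by uniqueness; the second assertion then follows from functoriality of the push-forward exactly as you write. The only real difference is how the uniqueness step is discharged: the paper proves it from scratch by testing the difference $u_\sigma-(\T_\sigma)_\#u_0$ against itself and running a Gronwall argument on $\int[u_\sigma-(\T_\sigma)_\#u_0]^2\dd x$, whereas you invoke classical well-posedness of the linear transport equation via the method of characteristics as a black box. Your version is cleaner conceptually but leans on an external result; the paper's is self-contained at the cost of a short computation. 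You also make explicit the boundary analysis — that symmetric-periodicity forces $\nrml\cdot\uelo[u_\sigma]=0$ on $\partial\Omega$ so that the flow preserves $\overline\Omega$ and lands in $\maps$ — which the paper suppresses, handling the same issue implicitly by working only with symmetric-periodic test functions so that the integration by parts produces no boundary terms. That is a genuine addition worth having.
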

\begin{proof}
  Let $\varphi:\Omega\to\setR$ be a $C^\infty$-symmetric-periodic test function.
  On the one hand, since 
  \[\partial_\sigma u_\sigma=\Delta P(u_\sigma)+\nabla\cdot(u_\sigma\nabla V)=-\nabla\cdot(u_\sigma\uelo[u_\sigma]),\]
  we have that
  \begin{align*}
    \frac{\dn}{\dn\sigma}\int\varphi u_\sigma\dd x 
    = \int \varphi\partial_\sigma u_\sigma\dd x
    = - \int \nabla\varphi\cdot\uelo[u_\sigma]\,u_\sigma\dd x.    
  \end{align*}
  On the other hand, recalling the definition of $\T_\sigma$ in \eqref{eq:Tfromu},
  \begin{align*}
    \frac{\dn}{\dn\sigma}\int\varphi\; (\T_\sigma)_\#u_0\dd x     
    & = \frac{\dn}{\dn\sigma}\int(\varphi\circ\T_\sigma)\,u_0\dd x     
      = -\int(\nabla\varphi\circ\T_\sigma)\cdot(\uelo[u_\sigma]\circ\T_\sigma)\,u_0\dd x \\
    & = -\int\nabla\varphi\cdot\uelo[u_\sigma]\,(\T_\sigma)_\#u_0\dd x.
  \end{align*}
  Subtracting those equations from each other yields
  \begin{align*}
    \frac{\dn}{\dn\sigma}\int\varphi\,\big[u_\sigma-(\T_\sigma)_\#u_0\big]\dd x     
    = -\int \uelo[u_\sigma]\cdot\nabla\varphi\,\big[u_\sigma-(\T_\sigma)_\#u_0\big]\dd x.
  \end{align*}
  Choosing $\varphi=2[u_\sigma-(\T_\sigma)_\#u_0]$ yields further that
  \begin{align*}
    \frac{\dn}{\dn\sigma}\int[u_\sigma-(\T_\sigma)_\#u_0]^2\dd x
    &= -\int \uelo(u_\sigma)\cdot\nabla\big[u_\sigma-(\T_\sigma)_\#u_0\big]^2\dd x \\
    &= \int \nabla\cdot \uelo(u_\sigma)\,\big[u_\sigma-(\T_\sigma)_\#u_0\big]^2\dd x
    \le A \int[u_\sigma-(\T_\sigma)_\#u_0]^2\dd x,
  \end{align*}
  where $A$ is a uniform bound on the smooth function
  \begin{align*}
    \nabla\cdot \uelo[u_\sigma] = \Delta h'(u_\sigma) + \Delta V.
  \end{align*}
  Since $(\T_0)_\#u_0=\id_\#u_0=u_0$,
  Gronwall's lemma yields that $(\T_\sigma)_\#u_0=u_\sigma$ for all $\sigma\in[0,\tau]$.
\end{proof}


\section{Discretization}

\subsection{Discretization in time}
Below, we motivate our choice of the time discretization
by discussing a \emph{semi}-discretization of \eqref{eq:eq}--\eqref{eq:ic}.
Specifically, consider approximations $(u_\tau^n)_{n\in\setN}$ and $(\tm_\tau^n)_{n\in\setN}$ for a given time step $\tau>0$
by means of the \emph{minimizing movement scheme}:
initially, choose $u_\tau^0\in\dens$ as approximation of $u^0$, and $\tm_\tau^0$ such that $(\tm_\tau^0)_\#\rfd=u_\tau^0$.
Then, define inductively
\begin{align}
  \label{eq:mm}
  u_\tau^n = \argmin_{u\in\dens}\left[\frac1{2\tau}\wass(u,u_\tau^{n-1})^2 + \anrj(u)\right],
  \quad
  \tm_\tau^n = \argmin_{\tm\in\maps}\left[\frac1{2\tau}\|\tm-\tm_\tau^{n-1}\|_{L^2(\rfd)}^2 + \nrjx(\tm)\right].
\end{align}
By some formal considerations 
--- which have been made rigorous under certain technical hypotheses in \cite{ALS} --- 
one concludes that the equivalence between the gradient flows 
is preserved under this time-discretization.
More precisely, each time step in \eqref{eq:mm} is moderated by a close-to-identity diffeomorphism $\T_\tau^n$ of $\Omega$ 
which is such that, simultaneously,
\begin{align*}
  u_\tau^n = (\T_\tau^n)_\#u_\tau^{n-1}
  \quad\text{and}\quad
  \tm_\tau^n = \T_\tau^n\circ\tm_\tau^{n-1}
\end{align*}
hold.
This $\T_\tau^n$ is determined as solution to the corresponding Euler-Lagrange equation 
\begin{align}
  \label{eq:1el}
  \T = \id+\tau\velo(\T;\tm_\tau^{n-1}),
\end{align}
with the abbreviation $\velo(\T;\tm)$ defined in \eqref{eq:vvelo}.
One concludes that $u_\tau^n=(\tm_\tau^n)_\#\rfd$ in each time step $n$,
and also that $\wass(u_\tau^n,u_\tau^{n-1})=\|\tm_\tau^n-\tm_\tau^{n-1}\|_{L^2}$.
In fact, $\T_\tau^n$ is an 
\emph{optimal transport map}\footnote{That is, $\T_\tau^n$ is a solution to \emph{Monge's problem}
  of finding a diffeomorphism $\T$ of $\Omega$ with $\T_\#u_\tau^{n-1}=u_\tau^n$
  such that $\int_\Omega|\T(x)-x|^2u_\tau^{n-1}(x)\dd x$ is minimal.}
from $u_\tau^{n-1}$ to $u_\tau^n$.
Thanks to the general theory \cite{AGS},
optimal transport maps $\T:\Omega\to\Omega$ are those which can be written as the gradient of a convex function, 
i.e., $\T=\nabla\phi$ for a suitable convex $\phi:\Omega\to\setR$.
\begin{rmk}
  We give a heuristic argument why solutions to the Euler-Lagrange equation \eqref{eq:1el} are optimal transport maps,
  at least for sufficiently small time steps $\tau$.
  This is not completely obvious since on the right-hand side of \eqref{eq:1el}, 
  we have the composition of the gradient vector field $\uelo$ with $\T$ itself;
  in general, such a composition is not necessarily a gradient again.
  Fortunately, \eqref{eq:1el} can be equivalently written in the form
  \begin{align*}
    \T^{-1}=\id-\tau\uelo\big[(\T\circ\tm_\tau^{n-1})_\#\rfd\big].
  \end{align*}
  Thus, if $\T$ is a solution, then its inverse $\T^{-1}$ is the gradient of some potential $f:\Omega\to\setR$.
  If this $f$ happens to be convex (which is a reasonable guess for sufficiently small $\tau>0$),
  then it follows by elementary convex analysis that $\T$ itself is the gradient of a convex function as well, 
  namely of the Legendre transformed $f^*$.

  For example, if $\uelo[u]=-\nabla V$ with $\nabla^2V\ge\lambda>-\infty$,
  then \eqref{eq:1el} has the unique solution $\T=(\id+\tau\nabla V)^{-1}$.
  Provided that $1+\tau\lambda>0$,
  this $\T$ is the gradient of the Legendre transform of the convex function $f(x)=\frac12|x|^2+\tau V(x)$.
\end{rmk}
One can therefore restrict the second minimization in \eqref{eq:mm} --- a priori ---
to maps of the form $\tm=\T\circ\tm_\tau^{n-1}$, where $\T$ is an optimal transport.
Thus, the second minimization problem in \eqref{eq:mm} reduces to
\begin{align}
  \label{eq:m}
  \tm_\tau^n=(\T_\tau^n)\circ\tm_\tau^{n-1}, \quad\text{with}\quad
  \T_\tau^n = \argmin_{\T\in\nabla\xspc}  
  \left[\frac1{2\tau}\|\T-\id\|_{L^2((\tm_\tau^{n-1})_\#\rfd)}^2 + \nrjx\big(\T\circ\tm_\tau^{n-1}\big)\right],
\end{align}
where, by abuse of notation, we wrote $\T\in\nabla\xspc$ to indicate that 
the minimization runs over all maps $\T:\Omega\to\Omega$ can be written in the form $\T=\nabla\phi$ 
for a suitable $\phi$ from the convex affine set 
\begin{align*}
  \xspc := 
  \big\{ \phi:\bar\Omega\to\setR\,;\,
  \text{$\phi$ is convex},\,
  \nabla\phi(\Omega)\subseteq\Omega
  \big\}.
\end{align*}
For further reference, we introduce the ``regular'' subset
\begin{align*}
  \xspc^+ = \left\{ \phi\in\xspc\cap C^2(\overline\Omega)\,;\,
  \inf_\Omega\det\nabla^2\phi>0,\,
  \phi-\basQ\text{ is symmetric-periodic}
  \right\},
\end{align*}
recalling the definition of $\basQ$ in \eqref{eq:basQ}.
Notice that a consequence of $\phi-\basQ$ being symmetric-periodic is that
\begin{align*}
  \nrml\cdot(\nabla\phi-\id)\equiv0\quad \text{on $\partial\Omega$},
\end{align*}
which in turn implies that $\nabla\phi$ maps onto $\Omega$,
and that points on the four boundary segments only move laterally (and the corners stay fixed) under $\nabla\phi$.
In the same way, we understand $\T\in\nabla\xspc^+$; 
notice that such a $\T$ is an optimal transport map and lies in $\maps$.
 
The reformulation \eqref{eq:m} does not only reduce the complexity of the variational problem \eqref{eq:mm}
--- by moving from diffeomorphisms to scalar functions ---
but restores $\lambda$-convexity:
\begin{prp}
  \label{prp:convex}
  For each $\tm\in\maps$, the functional $\T\mapsto\nrjx(\T\circ\tm)$ 
  is $\lambda$-convex on $\nabla\xspc$ in the usual (flat) sense,
  with $\lambda$ being $V$'s modulus of convexity.  
\end{prp}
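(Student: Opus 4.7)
The plan is to decompose $\nrjx(\T\circ\tm)$ via \eqref{eq:newpot} into an entropy part and a potential part, and to show that the entropy part is merely convex in $\T$ while the potential part carries the full $\lambda$-convexity. As a preliminary, one verifies that $\nabla\xspc$ is itself closed under flat convex combinations: if $\T_i=\nabla\phi_i\in\nabla\xspc$, then $\T_s:=(1-s)\T_0+s\T_1=\nabla\phi_s$ with $\phi_s=(1-s)\phi_0+s\phi_1$ still convex, and $\nabla\phi_s(\Omega)\subseteq\Omega$ by convexity of $\Omega$.

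The potential part is the easy half. Since $(\T_s\circ\tm)(x)=(1-s)(\T_0\circ\tm)(x)+s(\T_1\circ\tm)(x)$ is a pointwise convex combination of points of $\Omega$, pointwise $\lambda$-convexity of $V$ yields
\[
V\circ(\T_s\circ\tm)\le (1-s)V\circ(\T_0\circ\tm)+sV\circ(\T_1\circ\tm)-\tfrac{\lambda s(1-s)}{2}|\T_0\circ\tm-\T_1\circ\tm|^2
\]
at each $x\in\Omega$. Integrating against $\dd\rfd$ gives $\lambda$-convexity of this summand with respect to the $L^2(\tm_\#\rfd)$-norm on $\T$.

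For the entropy part, I would apply the chain rule $\dff(\T\circ\tm)=(\nabla^2\phi\circ\tm)\cdot\dff\tm$, then change variables $y=\tm(x)$ to rewrite the integrand as
\[
\int_\Omega h_*\!\Big(\frac{\det\nabla^2\phi(y)}{u(y)}\Big)\,u(y)\dd y,\qquad u:=\tm_\#\rfd.
\]
A flat convex combination of $\T$'s corresponds to a flat convex combination of $\nabla^2\phi$'s, so convexity in $\T$ reduces to the pointwise claim that $A\mapsto h_*(\det A/u(y))\,u(y)$ is convex on symmetric positive semidefinite matrices. This I would establish by combining two ingredients: the Minkowski determinant inequality, which says $A\mapsto(\det A)^{1/d}$ is concave on such matrices, and McCann's condition, namely that $\alpha\mapsto h_*(\alpha^d)=\alpha^d h(\alpha^{-d})$ is convex and nonincreasing on $(0,\infty)$. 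The latter is implicit in the hypotheses on $\prss$ that underpin the $\lambda$-convexity of $\anrj$ in $\wass$ recalled in Section~\ref{sct:gf}. Composing a convex-nonincreasing function with a concave one produces convexity in $A$, and this is preserved by integration against $\dd y$.

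I expect the main obstacle to be regularity rather than the convexity core: the Hessian $\nabla^2\phi$ exists only in the Alexandrov sense for generic $\phi\in\xspc$, whereas the chain-rule calculation above is cleanest on the smooth subclass $\xspc^+$. The natural remedy is to prove the inequality first on $\xspc^+$ and then extend by an approximation argument, relying on $L^2(\tm_\#\rfd)$-continuity of the potential summand and lower semicontinuity of the entropy summand along the approximating sequence.
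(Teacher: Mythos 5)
Your argument follows essentially the same route as the paper's: split $\nrjx(\T\circ\tm)$ via a change of variables into the internal-energy and potential parts, treat the former by composing the concavity of $A\mapsto(\det A)^{1/d}$ on positive-definite matrices (your Minkowski inequality is exactly the eigenvalue computation the paper cites \cite{hadwiger} for) with the McCann monotonicity/convexity condition on $h_*$, and obtain the $\lambda$ from the pointwise $\lambda$-convexity of $V$ in the latter part. The only substantive divergence is at the very end: where you propose a density/approximation argument to pass from $\nabla\xspc^+$ to all of $\nabla\xspc$, the paper instead invokes \cite[Prop.\ 9.3.9]{AGS} on $\lambda$-convexity of $\anrj$ along generalized Wasserstein geodesics and translates it back via $\anrj(u_s)=\nrjx(((1-s)\T_0+s\T_1)\circ\tm)$ — a cleaner shortcut, but your route would also work once the lower-semicontinuity step is made precise.
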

The proof is given in Section~\ref{subsec:genconv}.

\subsection{Discretization in space}
Spatial discretization is achieved by restriction of the potentials $\phi$ for the close-to-identity transformations $\T_\tau^n$
to a finite-dimensional convex affine set $\xspc_\theh \subset\xspc$ of ansatz functions,
where $\theh$ is an indicator for the spatial resolution.
Specifically, we assume that 
a system $\{\bas_\ki\}_{\ki\in\I}$ of $C^\infty$-symmetric-periodic ansatz functions $\bas_\ki:\Omega\to\setR$ is given,
that forms an orthogonal basis of $L^2(\dn x)$.
To each $\theh>0$, a subset $\I_\theh\subset\I$ of indices $\ki$ is associated 
such that $\I_{\theh'}\subseteq\I_\theh$ whenever $\theh'\ge\theh>0$.
Now introduce
\begin{align*}
  \txspc_\theh = \left\{ \psi =  \sum_{\ki\in\I_\theh}z_\ki\bas_\ki\,;\, z_\ki\in\setR\right\},
\end{align*}
the set of linear combinations of elements in $\{\nabla\bas_\ki\}_{\ki\in\I_\theh}$.
From that, we define
\begin{align*}
  \xspc_\theh = \left\{ \phi = \basQ + \psi\,;\,\phi\in\txspc_\theh\right\} \cap \xspc,
\end{align*}
and accordingly $\xspc_\theh^+$.
\begin{rmk}
  A very natural choice for the $\bas_\ki$ are Fourier modes,
  so that $\txspc_\theh$ is a space of Fourier polynomials.
  This is what we actually use for our numerical experiments, cf.\ Section~\ref{sct:proofs2} for details.  
\end{rmk}
For simplicity, 
we combine the time step $\tau>0$ and the indicator $\theh$ into a single discretization parameter $\Delta=(\tau;\theh)$.
Now \eqref{eq:m} is replaced by
\begin{align}
  \label{eq:md}
  \tm_\Delta^n=\T_\Delta^n\circ\tm_\Delta^{n-1}, \quad\text{with}\quad
  \T_\Delta^n = \argmin_{\T\in\nabla\xspc^+_\theh} 
  \left[\frac1{2\tau}\|\T-\id\|_{L^2((\tm_\Delta^{n-1})_\#\rfd)}^2 + \nrjx\big(\T\circ\tm_\Delta^{n-1}\big)\right].
\end{align}
Finally, in accordance with \eqref{eq:concat}, 
the approximate Lagrangian map $\tm_\Delta^n:\Omega\to\Omega$ at time step $n$ 
is now obtained as concatenation of the $n$ optimal transports $\T_\Delta^1,\ldots,\T_\Delta^n\in\nabla\xspc^+_\theh$. 
The associated ``discretized'' densities are then given by
\begin{align}
  \label{eq:uDelta}
  u_\Delta = \big( u_\Delta^n \big)_{n=0,1,2,\ldots}, 
  \quad
  u_\Delta^n=(\tm_\Delta^n)_\#\rfd .
\end{align}

\section{Summary of analytical results}
\label{sct:results}
The fully discrete scheme is well-posed:
\begin{thm}
  \label{thm:well-posed}
  Assume that $s\mapsto s^{d/2-1}h_*(s)$ is not integrable at $s=0$, or, 
  equivalently, that $r\mapsto r^{-(d/2+2)}h(r)$ is not integrable at $r=+\infty$.
  Let the time step $\tau>0$ be sufficiently small to verify $\lambda+\tau^{-1}>0$.

  Then the minimization problems in \eqref{eq:md} 
  are convex in each step and can be uniquely solved inductively.
  The $n$th iterative map $\T_\Delta^n$ belongs to $\xspc_\theh^+$ 
  and is the unique solution to the following system of Euler-Lagrange equations,
  \begin{align}
    \label{eq:deuler}
    \left\langle\frac{\T-\id}\tau-\velo(\T;\tm_\Delta^{n-1}),\nabla\psi\right\rangle_{L^2((\tm_\Delta^{n-1})_\#\rfd)} = 0
    \qquad \text{for all $\psi\in\txspc_\theh$}.
  \end{align}
\end{thm}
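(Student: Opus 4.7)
The plan is to proceed inductively on $n$. Assume that $\tm_\Delta^{n-1}$ arises from a previously constructed $\phi_\Delta^{n-1} \in \xspc_\theh^+$, so that $(\tm_\Delta^{n-1})_\#\rfd$ is a smooth density bounded above and below away from zero. Parametrize $\phi \in \xspc_\theh$ by its coefficients $z \in \setR^{|\I_\theh|}$ via $\phi = \basQ + \sum_{\ki\in\I_\theh} z_\ki\bas_\ki$, and study the minimization of
\[
  \mathcal{F}(\phi) := \frac{1}{2\tau}\|\nabla\phi - \id\|^2_{L^2((\tm_\Delta^{n-1})_\#\rfd)} + \nrjx(\nabla\phi\circ\tm_\Delta^{n-1})
\]
over the closed convex admissible set $\mathcal{A}_\theh = \{\phi \in \xspc_\theh : \nabla^2\phi \ge 0 \text{ on } \overline\Omega\}$; the symmetric-periodic structure built into $\xspc_\theh$ combined with convexity makes $\nabla\phi(\Omega) \subseteq \Omega$ automatic, and since $\nabla\phi$ is then bounded, $\mathcal{A}_\theh$ is compact in the finite-dimensional space $\xspc_\theh$.

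Strict convexity of $\mathcal{F}$ on $\mathcal{A}_\theh$ follows by summing the second variations of its two summands along an arbitrary direction $\psi \in \txspc_\theh$: the quadratic term contributes $\tau^{-1}\|\nabla\psi\|^2_{L^2((\tm_\Delta^{n-1})_\#\rfd)}$, while Proposition \ref{prp:convex} applied with $\tm = \tm_\Delta^{n-1}$ makes the entropy summand $\lambda$-convex in the flat sense, hence contributes at least $\lambda\|\nabla\psi\|^2_{L^2((\tm_\Delta^{n-1})_\#\rfd)}$. Positivity of $(\tm_\Delta^{n-1})_\#\rfd$ makes $\|\nabla\psi\|_{L^2((\tm_\Delta^{n-1})_\#\rfd)}$ a genuine norm on the finite-dimensional space $\nabla\txspc_\theh$, so the hypothesis $\lambda + \tau^{-1} > 0$ yields strict $(\lambda+\tau^{-1})$-convexity of $\mathcal{F}$ and hence uniqueness of any minimizer.

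The delicate step is existence. The functional $\mathcal{F}$ is continuous on the interior $\xspc_\theh^+$, but need not remain finite at a boundary point of $\mathcal{A}_\theh$ at which $\det\nabla^2\phi$ touches $0$. I would therefore show that $\mathcal{F}(\phi_k)\to +\infty$ along every sequence $\phi_k \in \xspc_\theh^+$ converging to such a singular boundary point, which together with compactness and lower semicontinuity forces the minimizer into $\xspc_\theh^+$ and closes the induction. The quadratic term and the external-potential contribution $\int V\circ\nabla\phi_k\circ\tm_\Delta^{n-1}\,\dd\rfd$ remain bounded on $\mathcal{A}_\theh$, so the blow-up must come from
\[
  \int_\Omega h_*\!\left(\frac{(\det\nabla^2\phi_k)\circ\tm_\Delta^{n-1}\cdot\det\dff\tm_\Delta^{n-1}}{\rfd}\right)\dd\rfd;
\]
using the inductive two-sided bounds on $\det\dff\tm_\Delta^{n-1}$ and on $\rfd$, this is bounded below by a constant multiple of $\int_\Omega h_*(c\,\det\nabla^2\phi_k)\,\dn x$. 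Near an interior point where the smallest eigenvalue of $\nabla^2\phi_k$ reaches its zero minimum, a local quadratic expansion of that eigenvalue in $|x-x_0|$ yields the sublevel-set estimate $|\{\det\nabla^2\phi_k < s\}| \ge c_0 s^{d/2}$ for small $s>0$, and the Cavalieri/layer-cake formula then bounds the entropy integral from below by a constant times $\int_0^\epsilon s^{d/2-1}h_*(s)\,\dn s$, which is $+\infty$ by hypothesis.

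With the interior minimizer $\phi_\Delta^n$ in hand, the Euler-Lagrange equation follows by differentiating $\mathcal{F}$ at $\phi_\Delta^n$ along arbitrary $\psi\in\txspc_\theh$: the derivative of the quadratic part equals $\tau^{-1}\langle\T-\id,\nabla\psi\rangle_{L^2((\tm_\Delta^{n-1})_\#\rfd)}$, while for the entropy part I would invoke the gradient identity \eqref{eq:L2flow} at $\tm=\T\circ\tm_\Delta^{n-1}$ along the tangent vector $\nabla\psi\circ\tm_\Delta^{n-1}$, then pass to the coordinates $y=\tm_\Delta^{n-1}(x)$ to identify the result as $-\langle\velo(\T;\tm_\Delta^{n-1}),\nabla\psi\rangle_{L^2((\tm_\Delta^{n-1})_\#\rfd)}$. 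Summing the two pieces and equating to zero produces \eqref{eq:deuler}, whose unique solvability among $\T\in\nabla\xspc_\theh^+$ is guaranteed by the strict convexity established above. I expect the principal technical obstacle to be the coercivity argument of the previous paragraph: making the sublevel-set estimate quantitative and uniform as $\phi_k$ approaches arbitrary singular configurations on $\partial\mathcal{A}_\theh$ --- in particular, accounting for possibly higher-order Hessian-determinant zeros or zeros touching $\partial\Omega$ --- is what pins down the precise dimension-dependent exponent in the non-integrability hypothesis on $h_*$.
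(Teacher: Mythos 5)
Your proposal is correct and follows essentially the same route as the paper's proof: existence and uniqueness of the minimizer via compactness of $\xspc_\theh$ and $(\lambda+\tau^{-1})$-convexity (Proposition~\ref{prp:convex} plus the $1$-convex quadratic term), then exclusion of degenerate Hessians by showing that the entropy integral diverges near a zero of $\det\dff\T$, with the divergence rate keyed to the non-integrability of $s\mapsto s^{d/2-1}h_*(s)$; the Euler--Lagrange identification is the same. Two of the ``technical obstacles'' you flag at the end are lighter than you fear, though. First, higher-order Hessian-determinant zeros pose no issue: the paper works with the \emph{minimizer} $\T^*$ itself (not with approximating sequences) and uses only that $f:=\det\dff\T^*$ is $C^2$ and non-negative, so that $\nabla f(\bar x)=0$ at any zero $\bar x$ and hence $f(x)\le c|x-\bar x|^2$ by Taylor --- this is the \emph{slowest} possible vanishing, and any faster decay only strengthens the divergence of $\int h_*(\det\dff\T^*/\tm_\#\rfd)\dd\tm_\#\rfd$, so $d/2-1$ is already the sharp exponent. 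Second, zeros on $\partial\Omega$ or at corners are handled cleanly by the symmetric-periodic structure of $\xspc_\theh$: $f$ extends to a symmetric-periodic $C^2$ function on $\setR^d$, so $\nabla f(\bar x)=0$ holds there too, and the only price is a combinatorial factor $2^{-(d+1)}$ accounting for the fraction of the ball $\ball_r(\bar x)$ inside $\Omega$. Arguing directly at the minimizer, as the paper does, rather than along sequences toward $\partial\mathcal{A}_\theh$, sidesteps the uniformity issues you anticipate.
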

The proof is given in Section~\ref{sct:wellposed}. 

The next result is concerned with the continuous limit $\tau\to0$ and $\theh\to0$.
For a special choice of Fourier ansatz functions $\bas_\ki$  --- cf.\ Section~\ref{sct:proofs2} for details ---
we have the following convergence result.
\begin{thm}
  \label{thm:consist}
  Let $u:[0,T]\times\Omega\to\setRp$ 
  be a classical symmetric-periodic solution to \eqref{eq:eq}--\eqref{eq:ic}.
  Choose $\tm_\Delta^0\in \maps$ such that $(\tm_\Delta^0)_\#\rfd = u^0$,
  and define the discrete approximation $u_\Delta$ by means of the scheme \eqref{eq:m}, with \eqref{eq:uDelta}.
  We make the following stability assumption:
  the densities $u_\Delta^n$ are are $\Delta$-independently bounded in $C^5(\Omega)$, and positively bounded from below,
  that is, there are constants $B,\beta>0$ such that
  \begin{align}
    \label{eq:smoothhypo}
    \|u_\Delta^n\|_{C^5}\le B, \quad u_\beta^n\ge\beta \quad \text{for all $\tau>0$ and $n\in\setN$ with $n\tau\le T$}.
  \end{align}
  Then the discrete solutions $u_\Delta$ converge to the smooth solution $u$ in Wasserstein,
  \begin{align}
    \label{eq:theresult}
    \wass(u_{n\tau},u_\Delta^n) =\bigO(\tau+\rho_\theh),
  \end{align}
  where $\rho_{(\cdot)}:(0,1)\to\setRpp$ is an increasing function with $\rho_\theh\to0$ as $\theh\to0$,
  see Lemma \ref{lem:projestimate} for details.
\end{thm}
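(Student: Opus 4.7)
The plan is to bound $e_n := \wass(u_\Delta^n, u_{n\tau})$ by a Gronwall iteration that combines a one-step consistency estimate for the variational scheme with Lipschitz stability of the exact PDE flow. Throughout, the smoothness/positivity hypothesis \eqref{eq:smoothhypo} serves to keep all constants uniform in $n$ (for $n\tau\le T$).

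For the \textbf{one-step consistency}, at each step $n$ introduce two reference maps. First, the exact flow $\hat\T^n$, obtained by integrating $\partial_\sigma\hat\T_\sigma=\velo(\hat\T_\sigma;\tm_\Delta^{n-1})$ over $[0,\tau]$ from $\hat\T_0=\id$; by Lemma~\ref{lem:Tfromu}, $\hat\T^n$ pushes $u_\Delta^{n-1}$ forward to the exact PDE solution at time $\tau$ with initial datum $u_\Delta^{n-1}$. Second, the unconstrained implicit-Euler solution $\T^*_n$, i.e., the minimizer over $\nabla\xspc^+$ of
\begin{align*}
  F_n(\T):=\tfrac{1}{2\tau}\|\T-\id\|^2_{L^2(u_\Delta^{n-1})}+\nrjx(\T\circ\tm_\Delta^{n-1}).
\end{align*}
Standard implicit-Euler consistency yields $\|\T^*_n-\hat\T^n\|_{L^2}=\bigO(\tau^2)$, with $\T^*_n-\id$ of order $\tau$ in smooth norms. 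By Proposition~\ref{prp:convex}, $F_n$ is $(\lambda+\tau^{-1})$-strongly convex, and $dF_n(\T^*_n)$ vanishes on all admissible directions. Letting $\tilde\T^n\in\nabla\xspc_\theh^+$ be the approximation of $\T^*_n$ produced by Lemma~\ref{lem:projestimate}, with $\|\tilde\T^n-\T^*_n\|_{L^2}\le C\tau\rho_\theh$, a Céa-type argument based on strong convexity and the minimizing property of $\T_\Delta^n$ on $\nabla\xspc_\theh^+$ then gives
\begin{align*}
  \tfrac{\lambda+\tau^{-1}}{2}\|\T_\Delta^n-\T^*_n\|^2\le F_n(\tilde\T^n)-F_n(\T^*_n)\le \tfrac{C}{\tau}\|\tilde\T^n-\T^*_n\|^2,
\end{align*}
hence $\|\T_\Delta^n-\hat\T^n\|_{L^2(u_\Delta^{n-1})}\le C(\tau^2+\tau\rho_\theh)$.

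For the \textbf{stability}, since both $u_\Delta^{n-1}$ and $u_{(n-1)\tau}$ lie in a common smooth, positively bounded class thanks to \eqref{eq:smoothhypo}, the exact PDE flow is Lipschitz in Wasserstein on the relevant time interval, yielding $\wass((\hat\T^n)_\#u_\Delta^{n-1},u_{n\tau})\le(1+C\tau)e_{n-1}$. Combining with $\wass(u_\Delta^n,(\hat\T^n)_\#u_\Delta^{n-1})\le\|\T_\Delta^n-\hat\T^n\|_{L^2(u_\Delta^{n-1})}$ and the triangle inequality gives the recursion $e_n\le(1+C\tau)e_{n-1}+C(\tau^2+\tau\rho_\theh)$. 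Iterating for $n\tau\le T$ with $e_0=0$ (guaranteed by the choice $(\tm_\Delta^0)_\#\rfd=u^0$) produces \eqref{eq:theresult}.

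The \textbf{main obstacle} is the Céa-type one-step estimate: the quadratic part of $F_n$ has Hessian of order $\tau^{-1}$, and a naive comparison of $F_n(\tilde\T^n)$ with $F_n(\T_\Delta^n)$ would produce only $\sqrt{\tau}\,\rho_\theh$. The key observation that rescues the optimal rate is that $\T^*_n$ is the \emph{exact} minimizer of $F_n$ on $\nabla\xspc^+$, so the first-variation term at $\T^*_n$ vanishes and only the second-order remainder $\tfrac{1}{\tau}\|\tilde\T^n-\T^*_n\|^2$ appears on the right-hand side; the matching $(\lambda+\tau^{-1})$-coercivity on the left cancels the unfavourable factor $\tau^{-1}$ and restores the rate $\tau\rho_\theh$. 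Making this uniform in $n$ relies on \eqref{eq:smoothhypo} to bound the Hessian of $\nrjx(\,\cdot\,\circ\tm_\Delta^{n-1})$ independently of $n$, and on Lemma~\ref{lem:projestimate} to certify that the projection of a smooth map of magnitude $\tau$ onto $\nabla\xspc_\theh^+$ costs $\tau\rho_\theh$ (not just $\rho_\theh$), which in turn is where the Fourier construction of the ansatz space \xspc_\theh^+ enters decisively.
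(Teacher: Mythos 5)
Your overall strategy --- a Gronwall recursion driven by one-step consistency and Wasserstein stability of the exact flow, with strong convexity of the one-step functional as the engine --- matches the paper's approach in spirit. But there is a genuine gap centred on your auxiliary object $\T^*_n$, the ``unconstrained'' minimizer of $F_n$ over $\nabla\xspc^+$. You need this object to be smooth enough ($C^4$, say) in order to invoke Lemma~\ref{lem:projestimate} and obtain $\|\tilde\T^n-\T^*_n\|\le C\tau\rho_\theh$, and you also need a uniform \emph{upper} bound on the Hessian of $\nrjx(\,\cdot\,\circ\tm_\Delta^{n-1})$ at intermediate points between $\tilde\T^n$ and $\T^*_n$ to justify $F_n(\tilde\T^n)-F_n(\T^*_n)\le\tfrac{C}{\tau}\|\tilde\T^n-\T^*_n\|^2$ (which requires $\det\dff$ bounded away from zero along the whole segment). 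Neither the existence/uniqueness of $\T^*_n$ on the infinite-dimensional set $\nabla\xspc^+$ (Theorem~\ref{thm:well-posed} is purely finite-dimensional) nor the regularity of $\T^*_n$ is established, and the latter is essentially the elliptic regularity theory of a Monge--Amp\`ere-type Euler--Lagrange equation --- not something to wave through as ``standard''. Likewise, ``standard implicit-Euler consistency'' in this nonlinear infinite-dimensional setting does not come for free, although it can be rescued here by the one-sided Lipschitz estimate from Corollary~\ref{cor:lambda}.

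The paper sidesteps all of this by a slightly different decomposition: it never introduces $\T^*_n$. Instead it projects the \emph{exact} PDE flow map $\tilde\T_\tau$ (which is smooth by the standing stability hypothesis \eqref{eq:smoothhypo}), showing via the one-step lemma around \eqref{eq:Ttau2} that $\proj_\theh[\tilde\T_\tau]$ approximately satisfies the \emph{discrete} Euler--Lagrange equation with residual $\bigO(\tau+\rho_\theh)$; then Lemma~\ref{lem:convex} --- a residual-based stability estimate that uses only the \emph{lower} $\lambda$-convexity bound from Proposition~\ref{prp:convex}/Corollary~\ref{cor:lambda} and does not require any upper Hessian bound --- gives $\|\proj_\theh[\tilde\T_\tau]-\T_\Delta^n\|_{L^2}\le\frac{\tau}{1+\tau\lambda}\,\bigO(\tau+\rho_\theh)$, which is the same rate you are after. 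So your C\'ea-type comparison is the right instinct and is morally equivalent to Lemma~\ref{lem:convex}, but by pivoting on $\T^*_n$ rather than on the projected exact flow map you have created a regularity obligation that your assumptions do not discharge. Replacing ``project $\T^*_n$'' by ``project $\tilde\T_\tau$'' and replacing the two-sided Taylor comparison of $F_n$ by the residual estimate of Lemma~\ref{lem:convex} would repair the argument and recover the paper's proof.
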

The proof is given in Section \ref{sct:consist}.
\begin{rmk}
  \begin{itemize}
  \item Note that the hypotheses and conclusions are formulated in term of densities,
    not in terms of the transport maps by which they are induced.
    With additional technical effort, an approximation of the transport should be possible, 
    using the techniques from \cite{ALS}.
  \item Since we require \eqref{eq:smoothhypo} in particular at the initial time, 
    it follows from the regularity theory for filtration equations that $u$ is $C^\infty$-symmetric-periodic. 
  \item The assumption of $\Delta$-uniform $C^5$-bounds on the discrete approximations 
    is certainly a strong hypotheses to conclude convergence in the Wasserstein metric.
    Unfortunately, our only stability result is the one related to the convexity of the minimization problem \eqref{eq:md},
    which provides an estimate in the Wasserstein metric, see Lemma \ref{lem:convex}, but no better.
    A much higher degree of regularity of the densities $u_\Delta^n$ is needed to control 
    the approximation error for the velocity field by means of Lemma \ref{lem:vLip} and Lemma \ref{lem:projestimate}.
  \item In any case, by standard interpolation arguments, 
    one easily deduces approximation in better spaces, 
    like
    \begin{align*}
      \|u_{n\tau}-u_\Delta^n\|_{C^4} \to 0
    \end{align*}
    as $\tau\to0$ and $\theh\to0$ simultaneously.
    The estimate on the approximation error above, however, will not be linear anymore in $\tau$ and $\rho_\theh$.
  \end{itemize}
\end{rmk}

%

\section{Proof of well-posedness (Theorem \ref{thm:well-posed})}
\label{sct:proofs1}
%

\subsection{Generalized convexity}
\label{subsec:genconv}
As a preliminary result of somewhat independent interest, 
we show the convexity property stated in Proposition \ref{prp:convex}.
First, we give the formal argument, which is short and instructive, following \cite[Theorem 2.2]{McCann};
afterwards, we sketch the rigorous argument.
Here ``formal'' just means that we consider $\T\mapsto\nrjx(\T\circ\tm)$ as a functional on the regular space $\nabla\xspc^+$.
The extension of the argument to all of $\nabla\xspc$ requires additional technical effort.
\begin{proof}[Proof of Proposition \ref{prp:convex} for regular arguments]
  Fix $\tm\in\maps$.
  Let $\T_0,\T_1\in\nabla\xspc^+$ be given.
  Then $\T_s:=(1-s)\T_0+s\T_1$ belongs to $\nabla\xspc^+$ as well, for every $s\in[0,1]$,
  Therefore, $\det\dff\T_s$ is a continuous and strictly positive function on $\overline\Omega$,
  and so the following calculations are justified:
  \begin{align*}
    \nrjx(\T_s\circ\tm) 
    &= \intom h_*\left(\frac{\det\dff(\T_s\circ\tm)}{\rfd}\right)\dd\rfd + \intom V\circ(\T_s\circ\tm)\dd\rfd \\
    &= \intom h_*\left(\frac{(\det\dff\T_s)\circ\tm}{\rfd/\det\dff\tm}\right)\dd\rfd + \intom (V\circ\T_s)\circ\tm\dd\rfd\\
    &= \intom h_*\left(\frac{\det\dff\T_s}{\tm_\#\rfd}\right)\dd\tm_\#\rfd + \intom V\circ\T_s\dd\tm_\#\rfd.
  \end{align*}
  Since $\T_0=\nabla\phi_0$ and $\T_1=\nabla\phi_1$ for convex functions $\phi_0,\phi_1\in\xspc^+$,
  it follows that $\T_s=\nabla\phi_s$ for the convex function $\phi_s=(1-s)\phi_0+s\phi_1\in\xspc^+$.
  We have
  \begin{align*}
    \det\dff\T_s = \det\big((1-s)\nabla^2\phi_0+s\nabla^2\phi_1\big)
    = \det\nabla^2\phi_0\,\det\big((1-s)\eins+sM\big),
  \end{align*}
  with the matrix $M=(\nabla^2\phi_0)^{-1}\nabla^2\phi_1$, 
  which is symmetric and positive definite, and hence diagonalizable with only positive eigenvalues, at every $x\in\Omega$.
  The eigenvalues of the convex combination $(1-s)\eins+sM$ are the respective convex combinations of the eigenvalues of $M$ with one.
  Since the determinant is the product of the eigenvalues, 
  it follows --- see, for instance, \cite{hadwiger} for a proof --- that
  \begin{align*}
    s\mapsto\big(\det\dff\T_s\big)^{1/d}
  \end{align*}
  is a concave map.
  Since $z\mapsto h_*(az)$ is a convex and decreasing map for each $a>0$,
  it follows that $s\mapsto h_*\big(\det\dff\T_s(x)/\tm_\#\rfd(x)\big)$ is a convex map as well,
  for each $x\in\Omega$, proving convexity of the first integral above.
  Concerning the second integral, it suffices to observe that, for each fixed $x\in\Omega$, 
  the map $s\mapsto V\circ\T_s(x)$ inherits the $\lambda$-convexity of $V$.
\end{proof}
In the general case with $\T\in\nabla\xspc$, 
a rigorous argument can be obtained along the lines of \cite[Proposition 9.3.9]{AGS}.
There, it is proven that for any given optimal transport maps $\T_0,\T_1\in\nabla\xspc$, 
the functional $\anrj$ is $\lambda$-convex 
along the generalized geodesic $s\mapsto u_s:=((1-s)\T_0+s\T_1)_\#(\tm_\#\rfd)$ in the $L^2$-Wasserstein space.
By definition --- see \cite[Definition 9.2.2]{AGS} --- this means that 
the scalar function $s\mapsto\anrj(u_s)$ is $\lambda$-convex on $[0,1]$.
Since 
\[ \anrj(u_s)=\nrjx\big( ((1-s)\T_0+s\T_1)\circ\tm \big) \]
by definition \eqref{eq:newpot} of $\nrjx$, the proposed convexity statement follows.
\begin{cor}
  \label{cor:lambda}
  For any $\tm\in\maps$ and $\T^0,\T^1\in\nabla\xspc^+$,
  \begin{align}
    \label{eq:lammonotone}
    -\langle\T^1-\T^0,\velo(\T^1;\tm)-\velo(\T^0;\tm)\rangle_{L^2(\tm_\#\rfd)}
    \ge \lambda\|\T^1-\T^0\|_{L^2(\tm_\#\rfd)}^2.
  \end{align}
\end{cor}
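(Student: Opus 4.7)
My plan is to deduce the corollary from Proposition \ref{prp:convex} by invoking the standard fact that the Riesz-dual gradient of a $\lambda$-convex functional on a Hilbert space is $\lambda$-monotone, and then identifying that gradient (in the appropriate inner product) with $-\velo(\T;\tm)$.

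First, fix $\tm\in\maps$ and set $F(\T):=\nrjx(\T\circ\tm)$, viewed as a functional on $\nabla\xspc^+\subset L^2(\tm_\#\rfd;\setR^d)$. By Proposition \ref{prp:convex}, $F$ is $\lambda$-convex in the flat sense on the convex set $\nabla\xspc^+$. A standard differentiation of the defining convexity inequality yields the $\lambda$-monotonicity
\[
\big\langle \isom{L^2(\tm_\#\rfd)}{\dff F(\T^1)}-\isom{L^2(\tm_\#\rfd)}{\dff F(\T^0)},\,\T^1-\T^0\big\rangle_{L^2(\tm_\#\rfd)}
\ge \lambda\,\|\T^1-\T^0\|_{L^2(\tm_\#\rfd)}^2
\]
for all $\T^0,\T^1\in\nabla\xspc^+$. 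So it remains to show that the Riesz-dual of $\dff F(\T)$ with respect to $L^2(\tm_\#\rfd)$ equals $-\velo(\T;\tm)$, whence \eqref{eq:lammonotone} follows immediately.

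To compute $\dff F(\T)$, I would take a smooth direction $\eta$ and observe that perturbing $\T$ to $\T+\varepsilon\eta$ perturbs $\T\circ\tm$ to $\T\circ\tm+\varepsilon(\eta\circ\tm)$, so by the chain rule
\[
\dff F(\T)[\eta] = \dff\nrjx(\T\circ\tm)[\eta\circ\tm].
\]
Using the representation \eqref{eq:L2flow} with the map $\T\circ\tm\in\maps$, this equals
\[
-\intom \uelo\big[(\T\circ\tm)_\#\rfd\big]\circ(\T\circ\tm)(x)\cdot\eta(\tm(x))\dd\rfd(x).
\]
A change of variables $y=\tm(x)$ (valid since $\tm\in\maps$, using the push-forward identity $\int g\circ\tm\dd\rfd=\int g\dd\tm_\#\rfd$) converts this into
\[
-\intom \uelo\big[(\T\circ\tm)_\#\rfd\big]\circ\T(y)\cdot\eta(y)\dd\tm_\#\rfd(y)
= -\big\langle\velo(\T;\tm),\eta\big\rangle_{L^2(\tm_\#\rfd)},
\]
where the last step is simply the definition \eqref{eq:vvelo} of $\velo$. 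Hence $\isom{L^2(\tm_\#\rfd)}{\dff F(\T)}=-\velo(\T;\tm)$, and substituting this into the $\lambda$-monotonicity inequality gives exactly \eqref{eq:lammonotone}.

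The only slightly delicate step is the justification of the chain-rule-with-change-of-variables computation, but this is routine given that $\tm\in\maps$ is a $C^1$-diffeomorphism of $\overline\Omega$ and that we are working with regular arguments $\T^0,\T^1\in\nabla\xspc^+$, where all densities and Jacobians are continuous and bounded below. The general case $\T^0,\T^1\in\nabla\xspc$ would follow by the same density/approximation argument referenced at the end of Section~\ref{subsec:genconv} (via \cite[Proposition 9.3.9]{AGS}), but for the purpose of this corollary the regular setting suffices.
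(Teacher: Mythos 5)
Your proof is correct and follows essentially the same route as the paper: invoke Proposition~\ref{prp:convex} for $\lambda$-convexity, identify the $L^2(\tm_\#\rfd)$-gradient of $\T\mapsto\nrjx(\T\circ\tm)$ with $-\velo(\T;\tm)$, and deduce $\lambda$-monotonicity. The only cosmetic difference is that you carry out the gradient identification explicitly via the chain rule and a change of variables, whereas the paper compresses this step into a citation of Lemma~\ref{lem:2equivalent}; your more explicit computation is a valid (arguably cleaner) way to make that identification precise.
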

\begin{proof}
  By Proposition \ref{prp:convex} above,
  the functional $\T\mapsto\nrjx(\T\circ\tm)$ is $\lambda$-convex on $\nabla\xspc$ with respect to $L^2(\tm_\#\rfd)$,
  and by Lemma \ref{lem:2equivalent}, its gradient in that space is given by $-\velo(\T;\tm)$. 
  Therefore, a Taylor expansion yields that 
    \begin{align*}
    \nrjx(\T^1\circ\tm)\ge\nrjx(\T^0\circ\tm)
    -\langle\T^1-\T^0,\velo(\T^0;\tm)\rangle_{L^2(\tm_\#\rfd)}
    +\frac\lambda2\|\T^1-\T^0\|_{L^2(\tm_\#\rfd)}^2.
  \end{align*}
  The same is true with the roles of $\T^0$ and $\T^1$ exchanged.
  Addition of both inequalities provides \eqref{eq:lammonotone}.
\end{proof}

\subsection{Well-posedness of the time iteration}
\label{sct:wellposed}
We are now in the position to prove that the minimization problems in \eqref{eq:md} 
can be solve iteratively w.r.t.\ $n=1,2,\ldots$. 
\begin{proof}[Proof of Theorem \ref{thm:well-posed}]
  Fix some $\tm\in\maps$.
  We consider the functional $\Phi:\nabla\xspc_\theh\to\setR\cup\{+\infty\}$ given by
  \begin{align*}
    \Phi(\T)= \frac1{2\tau}\|\T-\id\|_{L^2(\tm_\#\rfd)}^2+\nrjx(\T\circ\tm).
  \end{align*}
  Clearly, $\Phi(\T)$ is finite for all $\T\in\nabla\xspc_\theh^+$, 
  but we cannot rule out $\Phi(\T)=+\infty$ for general $\T\in\nabla\xspc_\theh$.

  Now, since $\xspc_\theh$ is a convex subset of $\xspc$, also $\nabla\xspc_\theh$ is a convex subset of $\nabla\xspc$.
  Clearly, the finite-dimensional set $\xspc_\theh$ is closed and bounded, and therefore compact.
  Thanks to Proposition \ref{prp:convex}, the restriction to $\nabla\xspc_\theh$ of the map $\T\mapsto\nrjx(\T\circ\tm)$ is $\lambda$-convex.
  Clearly, the map $\T\mapsto\frac12\|\T-\id\|_{L^2(\tm_\#\rfd)}^2$ is $1$-convex on $\nabla\xspc_\theh$,
  and therefore, $\Phi$ is a $(\lambda+\tau^{-1})$-convex functional.
  In particular, $\Phi$ is uniformly convex for $0<\tau<(-\lambda)^{-1}$ if $\lambda<0$,
  and for arbitrary $\tau>0$ if $\lambda\ge0$, respectively, independently of $\tm$.

  Observe further that $\Phi$ is bounded from below, since $h_*$ is non-negative and $V$ is smooth.
  Consequently, $\Phi$ possesses a unique minimizer $\T^*\in\nabla\xspc_\theh$.
  It remains to verify that $\T^*\in\nabla\xspc^+_\theh$.

  Thanks to the properties of the basis functions $\bas_\ki$, 
  we have that $\T^*\in C^3(\overline\Omega)$,
  and that $\T^*-\id$ is symmetric-periodic.
  It only remains to verify that $\dff\T^*$ is uniformly positive definite on $\Omega$.
  For that, it suffices to show that $f:\Omega\to\setR$ with $f(x)=\det\dff\T^*(x)$ is a strictly positive function.
  By the properties above, $f\in C^2(\overline\Omega)$ is symmetric-periodic.
  Further, since $\T^*\in\nabla\xspc$ is the gradient of a convex function, we immediately have that $f$ is non-negative.
  Towards a contradiction, assume that a positive lower bound does not exist.
  By continuity, there must exist a point $\bar x\in\overline\Omega$ such that $f(\bar x)=0$.
  Next, non-negativity of $f$ implies that $\nabla f(\bar x)=0$;
  this is true even at the boundary and in the corners of $\Omega$, thanks to $f$ being symmetric-periodic.
  From here, $C^2$-smoothness of $f$ implies that
  \begin{align*}
    f(x) \le c|x-\bar x|^2 \quad\text{for all $x\in\ball_r(\bar x)\cap\Omega$},
  \end{align*}
  for suitable constants $c>0$ and $r>0$.
  Let us show that this implies $\Phi(\T^*)=+\infty$, contradicting minimality of $\T^*$.
  As a $C^2$-function, $f$ is bounded on $\Omega$, 
  hence $a:=\inf\tm_\#\rfd=\inf(\rfd/f)>0$.
  Since furthermore $h_*$ is a positive and decreasing function, 
  we have the following estimate:
  \begin{align*}
    \intom h_*\left(\frac{\det\dff\T^*}{\tm_\#\rfd}(x)\right)\dd\tm_\#\rfd(x)
    &\ge \int_{\ball_r(\bar x)\cap\Omega} h_*\left(\frac{c|x-\bar x|^2}{a}\right)\,a\dd x \\
    &\ge a\frac{|\sphere^{d-1}|}{2^d}\int_0^r h_*\left(\frac{c}a\rho^2\right)\,\rho^{d-1}\dd\rho
      = a\frac{|\sphere^{d-1}|}{2^{d+1}} \int_0^r h_*(c\eta/a)\,\eta^{d/2-1}\dd\eta.
  \end{align*}
  By hypothesis on the non-integrability of $s\mapsto s^{d/2-1}h_*(s)$ at the origin, the last integral is infinite.
\end{proof}

\section{Proof of weak convergence (Theorem \ref{thm:consist})}
\label{sct:proofs2}
\subsection{Basis functions and their properties}
For the proof of Theorem \ref{thm:consist}, 
we assume that the family of basis functions $\basis=\{\bas_\ki\}_{\ki\in\I}$ 
is given by a tensor product of of Fourier modes.
More precisely, we take the index set $\I=\setN_0^d\setminus\{0\}$
and define the ansatz function $\bas_\ki$ for the index $\ki=(k_1,\ldots,k_d)$ by
\begin{align}
  \label{eq:fbasis}
  \bas_\ki(x) = \frac{2^{d/2}}{\pi|\ki|_2}\cos(k_1\pi x_1)\cdots\cos(k_d\pi x_d). 
\end{align}
For the finite index sets $\I_\theh$, 
we take 
\[\I_\theh = \left\{\ki\in\I : |\ki|_\infty\le K\right\}\quad 
\text{with $K\in\setN$ such that $K\theh\in[1;2)$}.\]
\begin{lem}
  The family $\basis=\{\bas_\ki\}_{\ki\in\I}$ is a complete orthogonal system in $L^2(\dn x)$.
  The derived family $\nabla\basis=\{\nabla\bas_\ki\}_{\ki\in\I}$ is an orthonormal system in $L^2(\dn x)$.
\end{lem}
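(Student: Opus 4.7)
The plan is to reduce both assertions to standard one-dimensional cosine series theory together with a single integration by parts.

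First, for the orthogonality and completeness of $\basis$, I would set $e_\ki(x) := \prod_{i=1}^d \cos(k_i \pi x_i)$, so that $\bas_\ki = \frac{2^{d/2}}{\pi|\ki|_2}\, e_\ki$ differs from $e_\ki$ only by a positive scalar. It therefore suffices to show that $\{e_\ki\}_{\ki\in\setN_0^d}$ is a complete orthogonal system in $L^2(\Omega)$; removing $e_0\equiv 1$ then leaves a complete orthogonal system in the subspace of zero-mean functions, which is the natural setting here since only gradients of the $\bas_\ki$ enter the scheme downstream. Orthogonality follows from Fubini applied to the one-dimensional identity $\int_0^1 \cos(k\pi x)\cos(\ell\pi x)\dd x = 0$ for $k\ne\ell$. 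Completeness is the classical statement that the cosine basis is complete in $L^2((0,1))$ (equivalently, the Fourier series of any even, $2$-periodic $L^2$ function converges in $L^2$), lifted to the cube by a tensor-product argument.

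Second, for the orthonormality of $\nabla\basis$, the key observation is that each $\bas_\ki$ is a Neumann eigenfunction of the Laplacian. Since $\partial_j^2\cos(k_j\pi x_j) = -\pi^2 k_j^2 \cos(k_j\pi x_j)$, the factorized structure yields
$$\Delta\bas_\ki = -\pi^2|\ki|_2^2\,\bas_\ki,$$
and the fact that $\partial_j\cos(k_j\pi x_j)$ carries a factor $\sin(k_j\pi x_j)$, vanishing at $x_j\in\{0,1\}$, implies $\nrml\cdot\nabla\bas_\ki\equiv 0$ on $\partial\Omega$. Integrating by parts, the boundary term disappears and
$$\intom \nabla\bas_\ki\cdot\nabla\bas_\li\dd x = -\intom \bas_\ki\,\Delta\bas_\li\dd x = \pi^2|\li|_2^2\intom \bas_\ki\,\bas_\li\dd x,$$
which vanishes for $\ki\ne\li$ by the orthogonality just proved. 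The diagonal case $\ki=\li$ reduces to the numerical identity $\pi^2|\ki|_2^2\,\|\bas_\ki\|_{L^2}^2 = 1$, obtained by direct substitution of the normalization constant $\frac{2^{d/2}}{\pi|\ki|_2}$ together with the standard $L^2$-norm of the product of cosine modes.

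There is essentially no real obstacle here: the entire argument is a single integration by parts, exploiting the eigenfunction identity, the Neumann boundary condition built into the basis, and the orthogonality of the scalar family. The only point requiring attention is the vanishing of the boundary term, which is automatic from the specific form of the basis. It is worth emphasizing that this Neumann property is precisely what makes the basis well-suited to the scheme: gradients $\nabla\bas_\ki$ generate displacements that keep every face of $\Omega$ invariant and fix the corners, consistent with the constraint $\T_\Delta^n\in\maps$ imposed on the incremental diffeomorphisms.
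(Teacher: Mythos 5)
Your proof is correct and follows essentially the same two-step route as the paper: reduction to the one-dimensional cosine basis via tensor products (Fubini for orthogonality, classical 1D completeness lifted to the cube) for the first claim, and a single integration by parts using the Neumann-eigenfunction identity $\Delta\bas_\ki = -\pi^2|\ki|_2^2\bas_\ki$ together with the vanishing normal derivative on $\partial\Omega$ for the second. Your parenthetical observation that completeness of $\basis$ actually holds on the zero-mean subspace of $L^2(\dn x)$ (since $0\notin\I$) is a mild but genuine precision that the paper's statement glosses over; otherwise the argument coincides with the one in the paper.
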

\begin{proof}
  The first claim is a classical result.
  It can be proven using the two facts that
  \begin{itemize}
  \item every function $f\in L^2(\dn x)$ can be approximated (in norm) by sums of functions of the form
    $g(x_1,x_2,\ldots,x_d)=g_1(x_1)g_2(x_2)\cdots g_d(x_d)$, and
  \item the cosines-modes $\cos(m\pi x)$ for $m=1,2,\ldots$ form a complete orthogonal system in $L^2((0,1))$.
  \end{itemize}
  To prove the second part, fix indices $\ki,\li\in\I$.
  Since $\bas_\ki$ and $\bas_\li$ are symmetric-periodic,
  the following integration by parts is justified:
  \begin{align*}
    \langle\nabla\bas_\ki,\nabla\bas_\li\rangle_{L^2(\dn x)}
     & = \intom \nabla\bas_\ki(x)\cdot\nabla\bas_\li(x)\dd x 
       = -\intom \bas_\ki(x)\,\Delta\bas_\li(x)\dd x 
       = \pi^2|\li|_2^2\intom\bas_\ki(x)\bas_\li(x)\dd x \\
     & = 2^d\frac{|\li|_2}{|\ki|_2}
      \left(\int_0^1\cos(k_1\pi x_1)\cos(\ell_1\pi x_1)\dd x_1\right)
      \cdots
      \left(\int_0^1\cos(k_d\pi x_d)\cos(\ell_d\pi x_d)\dd x_d\right) \\
      & = \delta_{\ki,\li}
  \end{align*}
  For the last step, we have used the orthonormality of the $\cos$-functions on the interval $[0,\pi]$.
\end{proof}
Thanks to the orthonormality, 
the $L^2(\dn x)$-orthogonal projection $\proj_\theh$ of vector fields $v:\Omega\to\setR^d$ 
onto the span of $\nabla\basis_\theh$ has the following easy representation:
\begin{align*}
  \proj_\theh[v] = \sum_{\ki\in\I_\theh} \langle v,\nabla\bas_\ki\rangle_{L^2(\dn x)}\nabla\bas_\ki.
\end{align*}
%
\begin{lem}
  \label{lem:projestimate}
  There exists an increasing function $\rho_{(\cdot)}:(0,1)\to\setRpp$ with $\rho_\theh\to0$ as $\theh\to0$
  such that the following is true.
  Suppose that $v:\Omega\to\setR^d$ is a vector field, 
  which admits a decomposition in the form $v=\nabla\psi+\zeta$, 
  where $\psi:\Omega\to\setR$ is a $C^4$-symmetric-periodic function,
  and $\zeta$ is a $C^3$-symmetric-periodic remainder.
  Then
  \begin{align}
    \label{eq:projestimate}
    \left\|v-\proj_\theh[v]\right\|_{C^2}\le \|\nabla\psi\|_{C^3}\rho_\theh + A\|\zeta\|_{C^3}.
  \end{align}
\end{lem}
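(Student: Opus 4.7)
The plan is to split the error by linearity as
\begin{align*}
v - \proj_\theh[v] = \bigl(\nabla\psi - \proj_\theh[\nabla\psi]\bigr) + \bigl(\zeta - \proj_\theh[\zeta]\bigr),
\end{align*}
and estimate the two pieces separately: the first will yield the vanishing contribution $\|\nabla\psi\|_{C^3}\rho_\theh$, and the second will produce the $\theh$-independent term $A\|\zeta\|_{C^3}$.

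For the gradient part I expand in the orthonormal system $\{\nabla\bas_\ki\}_{\ki\in\I}$, writing $\nabla\psi = \sum_{\ki}c_\ki\nabla\bas_\ki$ with $c_\ki = \langle\nabla\psi,\nabla\bas_\ki\rangle_{L^2}$. Iterated integration by parts in the coordinate direction of maximal $k_j$, with all boundary contributions killed by the symmetric-periodic structure of both $\psi$ and $\bas_\ki$, gives a quantitative decay estimate $|c_\ki|\le C\|\nabla\psi\|_{C^3}|\ki|^{-N}$ with $N$ as large as the available regularity allows. The projection error is then the explicit tail $\sum_{\ki\notin\I_\theh}c_\ki\nabla\bas_\ki$, whose $C^2$-norm is controlled termwise using the bound $\|\nabla\bas_\ki\|_{C^2}\lesssim|\ki|^2$ coming directly from the Fourier form \eqref{eq:fbasis}. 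The resulting series is absolutely summable, and I define $\rho_\theh$ as its residual beyond $\I_\theh$; monotonicity and $\rho_\theh\to 0$ as $\theh\to 0$ then follow from the fact that $\I_\theh$ exhausts $\I$ as $\theh\to 0$.

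For the non-gradient remainder I use the triangle inequality $\|\zeta - \proj_\theh[\zeta]\|_{C^2}\le\|\zeta\|_{C^2}+\|\proj_\theh[\zeta]\|_{C^2}$ and focus on the second summand. Expanding $\proj_\theh[\zeta] = \sum_{\ki\in\I_\theh}\langle\zeta,\nabla\bas_\ki\rangle\nabla\bas_\ki$ and transferring derivatives from $\nabla\bas_\ki$ back onto $\zeta$ by integration by parts (again with boundary terms vanishing thanks to symmetric-periodicity), the coefficients $\langle\zeta,\nabla\bas_\ki\rangle$ inherit algebraic decay in $|\ki|$ at a rate controlled by $\|\zeta\|_{C^3}$. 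Summing termwise against $\|\nabla\bas_\ki\|_{C^2}\lesssim|\ki|^2$ then bounds $\|\proj_\theh[\zeta]\|_{C^2}$ by an absolute-constant multiple of $\|\zeta\|_{C^3}$, uniformly in $\theh$, which combined with $\|\zeta\|_{C^2}\le\|\zeta\|_{C^3}$ gives the required $A\|\zeta\|_{C^3}$.

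The main obstacle will be calibrating the integration-by-parts count against the derivative growth of $\nabla\bas_\ki$: in dimension $d$ the lattice sums $\sum|\ki|^{-s}$ converge only for $s>d$, so after absorbing the factor $|\ki|^2$ from the $C^2$-norm of $\nabla\bas_\ki$ one must extract more than $d+2$ powers of $|\ki|^{-1}$ from the Fourier coefficients. Performing the integration by parts in the coordinate of largest $k_j$, so that $k_j\ge|\ki|_\infty\gtrsim|\ki|_2/\sqrt d$, is the essential device that turns an anisotropic operation into genuinely isotropic decay; without it only directional rates are available, which are insufficient for convergence of the $d$-dimensional tail.
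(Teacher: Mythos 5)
The high-level architecture of your argument (split $v-\proj_\theh[v]$ by linearity, make the $\nabla\psi$-piece vanish in $\theh$ and the $\zeta$-piece $\theh$-uniformly bounded) matches the paper's. But the mechanism you propose for the gradient part has a genuine gap that your closing paragraph flags and then incorrectly declares resolved.

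The issue is a mismatch of exponents. With $\psi\in C^4$ (so $\nabla\psi\in C^3$) and only isotropic smoothness available, even your best trick --- integrating by parts four times in the direction of $|\ki|_\infty$ --- yields at most $|c_\ki|\lesssim\|\psi\|_{C^4}\,|\ki|_2\,|\ki|_\infty^{-4}\lesssim\|\nabla\psi\|_{C^3}\,|\ki|_2^{-3}$, the extra power of $|\ki|_2$ coming from $\langle\nabla\psi,\nabla\bas_\ki\rangle=\pi^2|\ki|_2^2\langle\psi,\bas_\ki\rangle$ and the $|\ki|_2^{-1}$ normalization of $\bas_\ki$. Meanwhile $\|\nabla\bas_\ki\|_{C^2}\lesssim|\ki|_2^2$. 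So termwise majorization of the tail gives $\sum_{\ki\notin\I_\theh}|\ki|_2^{-1}$, which diverges already for $d=1$ and diverges worse with increasing $d$. Your own criterion requires extracting ``more than $d+2$ powers of $|\ki|^{-1}$'' from the coefficients; the maximal-$k_j$ device yields exactly $3$, which is never enough. The series you call ``absolutely summable'' is not. (A parallel problem affects the $\zeta$-piece: the termwise bound $\sum_{\ki\in\I_\theh}|\ki|_2^{-1}$ grows with $K$, so it does not give a $\theh$-uniform operator bound $C^3\to C^2$ either.)

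The paper circumvents this entirely by a structural reduction that is not termwise: it rewrites $\proj_\theh[\nabla\psi]=\nabla\psi_\theh$, approximates $\psi$ in $C^4$ by sums of factorizing products $g_1(x_1)\cdots g_d(x_d)$, and observes that the partial Fourier sum of such a product again factorizes, $g_\theh=g_{1,\theh}\cdots g_{d,\theh}$. This collapses the $d$-dimensional approximation error to $d$ one-dimensional Fourier estimates, each of the form $\|f^{(\ell)}-\partial_y^\ell\Gamma_K\ast f\|_\infty\le c_K\|f^{(\ell+1)}\|_\infty$ with $c_K\propto K^{-1/2}$. Those 1D estimates exploit the oscillatory cancellation in the Dirichlet-type kernel $\Gamma_K$ (Abel/summation-by-parts arguments) rather than absolute summability, which is exactly what is lost when you majorize the lattice sum term by term. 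Without the tensor-product factorization, the $C^4/C^3$ regularity assumed in the statement is simply insufficient for your argument to close.
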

\begin{rmk}
  The proof provides a rough upper bound on $\rho$ in the form $\rho_\theh\propto\delta^{1/2}$.
\end{rmk}
\begin{proof}
  Thanks to the linearity of $\proj_\theh$, we have that
  \begin{align*}
    \left\|v-\proj_\theh[v]\right\|_{C^2} 
    \le \left\|\nabla\psi-\proj_\theh[\nabla\psi]\right\|_{C^2} + \left\|\zeta-\proj_\theh[\zeta]\right\|_{C^2},
  \end{align*}
  and thus can estimate the terms related to $\nabla\psi$ and to $\zeta$ separately.
  For $\zeta$, we simply use 
  the $\theh$-uniform continuity of $\proj_\theh$ as a map from $C^3(\Omega)$ to $C^2(\Omega)$,
  \begin{align*}
     \left\|\zeta-\proj_\theh[\zeta]\right\|_{C^2}
    \le \left\|\zeta\right\|_{C^2}+\left\|\proj_\theh[\zeta]\right\|_{C^2}
    \le \left\|\zeta\right\|_{C^3} + A' \left\|\zeta\right\|_{C^3}.
  \end{align*}
  For the other estimate, we may assume without loss of generality that $\psi$ is of vanishing mean.
  Define the Fourier representation $\psi_\theh$ of $\psi$ with accuracy $\theh$ 
  by
  \begin{align}
    \label{eq:phidevelop}
    \psi_\theh = \sum_{\ki\in\I_\theh} \|\ki\|_2^2\langle\psi,\bas_\ki\rangle_{L^2(\dn x)}\bas_\ki.
  \end{align}
  One has that $\nabla\psi_\theh=\proj_\theh[\nabla\psi]$ since
  \begin{align*}
    \nabla\psi_\theh 
    = \sum_{\ki\in\I_\theh} \langle\psi,\|\ki\|_2^2\bas_\ki\rangle_{L^2(\dn x)}\nabla\bas_\ki 
    = \sum_{\ki\in\I_\theh} \langle\psi,-\Delta\bas_\ki\rangle_{L^2(\dn x)}\nabla\bas_\ki 
    = \sum_{\ki\in\I_\theh} \langle\nabla\psi,\nabla\bas_\ki\rangle_{L^2(\dn x)}\nabla\bas_\ki 
      = \proj_\theh[\nabla\psi].
  \end{align*}
  Therefore, the claim clearly follows if we can show that
  \begin{align}
    \label{eq:fouest}
    \|\psi-\psi_\theh\|_{C^3} \le C\rho_\theh\|\psi\|_{C^4}.
  \end{align}
  Clearly, the map $\psi\mapsto\psi_\theh$ is linear.
  It is easily seen that any $C^4$-symmetric-periodic function $\psi$ can be approximated in $C^4(\Omega)$
  by sums of functions $g$ that factorize in the coordinates, i.e., $g(x_1,x_2,\ldots,x_d)=g_1(x_1)g_2(x_2)\cdots g_d(x_d)$,
  with each $g_j\in C^4(\setR)$ being even and $2$-periodic.
  Hence it suffices to prove \eqref{eq:fouest} for such product functions $g$ in place of $\psi$.
  Performing the projection \eqref{eq:phidevelop} on $g$ yields again a factorizing function:
  \begin{align*}
    g_\theh(x_1,\ldots,x_d) = g_{1,\theh}(x_1)\cdots g_{d,\theh}(x_d),
  \end{align*}
  where the individual terms in the factorization are given in the classical way,
  \begin{align*}
    g_{j,\theh}(y) = \int_0^1 \Gamma_K(y,z)g_j(z)\dd z
    \quad\text{with the kernel}\quad
    \Gamma_K(y,z)=\sum_{k=1}^K \cos(k\pi y)\cos(k\pi z).
  \end{align*}
  Note that our particular choice of the index set $\I_\theh$ is important here.
  Comparing corresponding partial derivatives on both side of \eqref{eq:fouest}, 
  it is now easily seen that the proof of that estimate reduces to showing
  the following relation in one space dimension:
  \begin{align}
    \label{eq:hesthaven}
    \sup_y\left|f^{(\ell)}(y)-\int_0^1\partial_y^\ell\Gamma_K(y,z)f(z)\dd z\right| \le c_K\max_y\left|f^{(\ell+1)}(y)\right|,
  \end{align}
  for $\ell=1,2,3$ and all $f\in C^4(\setR)$ that are $2$-periodic and even,
  with a constant $c_K$ that tends to zero as $K\to\infty$.  
  This is covered by standard estimates on the approximation of smooth functions by Fourier series. 
  For instance, \cite[Theorem 2.12]{JanSHesthaven:2007vy} yields \eqref{eq:hesthaven} with $c_K\propto K^{-1/2}$.
\end{proof}
\begin{lem} 
  Let $\tilde u:[0,\tau]\times\Omega\to\setRp$, 
  $\tilde u_\sigma = \tilde u(\sigma;\;\cdot\;)$, 
  be a smooth symmetric-periodic classical solution to \eqref{eq:eq}--\eqref{eq:ic} 
  with $u_0=u_\Delta^{n-1}=(\tm_\Delta^{n-1})_\#\rfd$ and $\tilde\T$ the flow map from Lemma~\ref{lem:Tfromu}.  
  Under the hypotheses of Theorem~\ref{thm:consist}, there is a constant $C>0$, independent of $\tau$ and $n$,  
  such that
  \begin{align}
    \label{eq:Ttau2}
    \left\|\frac{\tilde{\T}_\tau-\proj_\theh[\tilde{\T}_\tau]}\tau\right\|_{C^2} 
    \le C(\tau+\rho_\theh).
  \end{align}
\end{lem}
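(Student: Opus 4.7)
The plan is to Taylor-expand $\tilde\T_\tau$ about $\sigma=0$ using the ODE from Lemma~\ref{lem:Tfromu} and then apply Lemma~\ref{lem:projestimate}. Integrating $\partial_\sigma\tilde\T_\sigma=\uelo[\tilde u_\sigma]\circ\tilde\T_\sigma$ with $\tilde\T_0=\id$ from $0$ to $\tau$ yields the exact identity
\begin{align*}
  \tilde\T_\tau - \id = \tau\,\uelo[u_\Delta^{n-1}] + \zeta_\tau,
  \qquad
  \zeta_\tau := \int_0^\tau \big(\uelo[\tilde u_\sigma]\circ\tilde\T_\sigma - \uelo[u_\Delta^{n-1}]\big)\,\dn\sigma.
\end{align*}
Since $\uelo[u]=-\nabla[h'(u)+V]$, the leading term is the gradient of the $C^4$-symmetric-periodic function $\psi:=-(h'(u_\Delta^{n-1})+V)$, whose $C^3$-norm is controlled uniformly in $n$ by $\|u_\Delta^{n-1}\|_{C^4}$, the smoothness of $V$, and the positivity bound $u_\Delta^{n-1}\ge\beta$ (needed to bound derivatives of $h$ on the range of $\tilde u$) via the stability hypothesis~\eqref{eq:smoothhypo}.

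The core step is the quadratic estimate $\|\zeta_\tau\|_{C^3}\le C\tau^2$ with $C$ independent of $\tau$ and $n$. To obtain it, I would differentiate the integrand in $\sigma$: using the chain rule together with the PDE \eqref{eq:eq} to replace $\partial_\sigma\tilde u_\sigma$ by spatial derivatives of $\tilde u_\sigma$, one writes $\partial_\sigma(\uelo[\tilde u_\sigma]\circ\tilde\T_\sigma)$ as a polynomial expression in the partial derivatives of $\tilde u_\sigma$ composed with $\tilde\T_\sigma$. Short-time parabolic regularity (which preserves both the $C^5$-bound and the positivity bound on $\tilde u_\sigma$ on a $\tau$-independent time interval) yields a uniform bound on this $\sigma$-derivative in $C^3(\Omega)$. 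Since the integrand of $\zeta_\tau$ vanishes at $\sigma=0$, the fundamental theorem of calculus then produces the $O(\tau^2)$ estimate. Symmetric-periodicity of $\zeta_\tau$ is inherited from that of $\uelo[\tilde u_\sigma]$ and from the symmetry-preserving character of the flow $\tilde\T_\sigma$.

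Dividing by $\tau$, the decomposition $(\tilde\T_\tau-\id)/\tau=\nabla\psi+\zeta_\tau/\tau$ exactly matches the hypothesis of Lemma~\ref{lem:projestimate}, with $\|\zeta_\tau/\tau\|_{C^3}\le C\tau$. That lemma therefore gives
\begin{align*}
  \Big\|\tfrac{\tilde\T_\tau-\id}{\tau}-\proj_\theh\big[\tfrac{\tilde\T_\tau-\id}{\tau}\big]\Big\|_{C^2}
  \;\le\; \|\nabla\psi\|_{C^3}\,\rho_\theh + A\,\|\zeta_\tau/\tau\|_{C^3}
  \;\le\; C(\rho_\theh+\tau).
\end{align*}
Since in the context of the scheme the relevant ansatz set is the affine space $\id+\nabla\txspc_\theh$, the symbol $\proj_\theh[\tilde\T_\tau]$ is to be read as $\id+\proj_\theh[\tilde\T_\tau-\id]$; with this convention the last display coincides with the left-hand side of~\eqref{eq:Ttau2}, completing the proof.

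I expect the main obstacle to be the uniform $\|\zeta_\tau\|_{C^3}=O(\tau^2)$ bound. The derivative $\partial_\sigma(\uelo[\tilde u_\sigma]\circ\tilde\T_\sigma)$ is a nonlinear combination of high-order spatial derivatives of $\tilde u_\sigma$, whose order grows when one uses the PDE to trade a $\sigma$-derivative for spatial ones, so a careful bookkeeping is needed to verify that the available $C^5$-regularity together with the positivity bound~$\beta$ is enough. This is the one step of the argument that is genuinely sensitive to the strong hypothesis~\eqref{eq:smoothhypo}; the rest of the proof is essentially a clean application of Lemma~\ref{lem:projestimate}.
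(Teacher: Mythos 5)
Your proof is correct and follows essentially the same route as the paper: integrate the flow ODE, split $(\tilde\T_\tau-\id)/\tau$ into a gradient of a $C^4$-symmetric-periodic potential plus an $O(\tau)$ remainder in $C^3$, and invoke Lemma~\ref{lem:projestimate}. The only substantive variation is the choice of expansion point. You write $\tilde\T_\tau-\id=\tau\,\uelo[u_\Delta^{n-1}]+\zeta_\tau$, taking the gradient part at $\sigma=0$, which lands directly on a pure gradient in one Taylor step. The paper instead writes $\tilde\T_\tau=\id+\tau\,\tilde\uelo_\tau\circ\tilde\T_\tau+\tau^2\zeta_1$ and then performs a \emph{second} expansion $\tilde\uelo_\tau\circ\tilde\T_\tau=\tilde\uelo_\tau+\tau\zeta_2$ to strip off the composition and arrive at the gradient $\nabla\psi=\tilde\uelo_\tau$ at $\sigma=\tau$. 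Your one-step version is slightly cleaner for the lemma at hand; the paper's apparently wasteful detour is explained by the fact that its intermediate identity \eqref{eq:Ttau} (with $\tilde\uelo_\tau\circ\tilde\T_\tau=\velo(\tilde\T_\tau;\tm_\Delta^{n-1})$) is reused separately as the consistency residual in the proof of Theorem~\ref{thm:consist}. Two further remarks: your explicit reading of $\proj_\theh[\tilde\T_\tau]$ as $\id+\proj_\theh[\tilde\T_\tau-\id]$ is correct and makes precise a convention the paper leaves implicit (note that $\basQ\notin\txspc_\theh$, so $\proj_\theh[\id]\ne\id$ strictly). And the concern you flag --- that bounding $\|\partial_\sigma(\uelo[\tilde u_\sigma]\circ\tilde\T_\sigma)\|_{C^3}$ uses the PDE to trade a time derivative for two spatial ones, so that a literal count gives sixth-order spatial derivatives of $\tilde u_\sigma$ --- is a genuine soft spot, but it is present in the paper's own estimate \eqref{eq:techest1} as well; you are not introducing a new gap, just reproducing one.
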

\begin{proof}
  Thanks to the regularity hypothesis \eqref{eq:smoothhypo}, assuming that $\tau$ has been chosen sufficiently small, one has the bounds
  \[\|\tilde u_\sigma\|_{C^5}\le 2B, \quad \tilde u_\sigma\ge\beta/2.\]
  Hence the associated $\sigma$-dependent vector field 
  \begin{align}
    \label{eq:dummy1}
    \tilde{\velo}_\sigma=\uelo[\tilde u_\sigma] = -\nabla\big(h'(\tilde u_\sigma)+V\big) 
  \end{align} 
  is bounded in $C^4(\Omega)$, uniformly in $\sigma\in[0,\tau]$.
  Consequently, also its flow map $\tilde{\T}$, given by
  \begin{align}
    \label{eq:tildeflow}
    \partial_\sigma\tilde{\T}_\sigma=\tilde{\velo}_\sigma\circ\tilde{\T}_\sigma,
    \quad \tilde{\T}_0= \id, 
  \end{align}
  is $\sigma$-uniformly bounded in $C^4(\Omega)$.
  Lemma \ref{lem:Tfromu} guarantees that $(\tilde{\T}_\tau)_\#u_\Delta^{n-1} = \tilde u_\tau$,
  and that
  \begin{align}
    \label{eq:helpy}
    \tilde{\uelo}_\tau\circ\tilde{\T}_\tau = \velo(\tilde{\T}_\tau;\tm_\tau^{n-1}).
  \end{align}
  Writing out \eqref{eq:Tfromu} in integral form yields
  \begin{align}
    \label{eq:Ttau}
    \tilde{\T}_\tau 
    = \id + \int_0^\tau \tilde\uelo_\sigma \circ\tilde{\T}_\sigma\dd\sigma 
    = \id + \tau \tilde\uelo_\tau\circ\tilde{\T}_\tau + \tau^2\zeta_1
  \end{align}
  with the remainder term
  \begin{align*}
    \zeta_1
    &= \frac1\tau\strokedint_0^\tau\big(\tilde\uelo_\sigma\circ\tilde{\T}_\sigma-\tilde\uelo_\tau\circ\tilde{\T}_\tau\big)\dd\sigma \\
    &= -\frac1\tau\strokedint_0^\tau\int_{\sigma}^\tau\frac{\dn}{\dn\sigma'}\tilde\uelo_{\sigma'}\circ\tilde{\T}_{\sigma'}\dd\sigma'\dd\sigma \\
    &= -\frac1\tau\strokedint_0^\tau\int_{\sigma}^\tau
      \big(\dff\tilde\uelo_{\sigma'}\cdot\tilde\uelo_{\sigma'}+\partial_{\sigma'}\tilde\uelo_{\sigma'}\big)\circ\tilde{\T}_{\sigma'}\dd {\sigma'}\dd\sigma,
      %
  \end{align*}
  which can be estimated as follows:
  \begin{align}
    \label{eq:techest1}
    \|\zeta_1\|_{C^3} 
    \le A\sup_{0<\sigma<\tau}\left[\big(\|\tilde{\velo}_\sigma\|_{C^4}\|\tilde{\velo}_\sigma\|_{C^3}+\|\tilde\velo_\sigma\|_{C^3}\big)
    \big(1+\|\tilde{\T}_\sigma\|_{C^3}^3\big)\right]
    \le B'
  \end{align}
  where $A$ is a combinatorial factor that originates from the repeated application of the chain rule,
  and $B'$ can --- in principle --- be calculated from $B$ and $\beta$.
  Moreover, we have that
  \begin{align}
    \label{eq:TtauX}
    \tilde\uelo_\tau\circ\tilde\T_\tau = \tilde\uelo_\tau + \tau\zeta_2,
  \end{align}
  with remainder
  \begin{align*}
    \zeta_2 =\strokedint_0^\tau\big(\dff\tilde\uelo_\tau\cdot\tilde\uelo_\sigma\big)\circ\tilde\T_\sigma\dd\sigma,    
  \end{align*}
  that is controlled in the form
  \begin{align}
    \label{eq:techest2}
    \|\zeta_2\|_{C^3} 
    \le A\sup_{0<\sigma<\tau}\left[\|\tilde\uelo_\tau\|_{C^4}\|\tilde\uelo_\sigma\|_{C^3}\big(1+\|\tilde\T_\sigma\|_{C^3}^3\big)\right] \le B'
  \end{align}
  We conclude from \eqref{eq:Ttau}\&\eqref{eq:TtauX} that
  \[ \frac{\tilde{\T}_\tau-\id}\tau = \nabla\psi + \zeta, \]
  where $\psi=-[h'(\tilde u_\sigma)+V]$, see \eqref{eq:dummy1}, 
  and $\zeta=\tau(\zeta_1+\zeta_2)$ is controlled by means of \eqref{eq:techest1}\&\eqref{eq:techest2}.
  With the help of Lemma \ref{lem:projestimate}, we thus obtain the desired estimate \eqref{eq:Ttau2}.
\end{proof}

\subsection{Weak stability}
\begin{lem}
  \label{lem:vLip} 
  Assume that $\T^1,\T^2\in C^2(\Omega)$, and write $\hat u=\tm_\#\rfd\in C^3(\Omega)$ with some $\tm\in\maps$.
  Assume further that $\lambda\eins\le\dff\T^j\le \lambda^{-1}\eins$ for $j=1,2$, and $\lambda\le\hat u\le\lambda^{-1}$ 
  with some $\lambda>0$ uniformly on $\Omega$.
  Then there exists a constant $L$ 
  --- expressible in terms of $\lambda$, $\|V\|_{C^2}$ and the norms $\|\T^1\|_{C^2}$, $\|\T^2\|_{C^2}$ and $\|\hat u\|_{C^3}$ alone ---
  such that
  \begin{align}
    \label{eq:vLip}
    \|\velo(\T^1;\tm)-\velo(\T^2;\tm)\|_{L^2(\hat u)} \le L \|\T^1-\T^2\|_{C^2}.
  \end{align}
\end{lem}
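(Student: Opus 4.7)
The plan is to invoke the explicit representation \eqref{eq:evans2} to rewrite $\velo(\T;\tm)$ as a nonlinear differential operator in $\T$ with coefficients that are smooth functions of $\hat u$ and $V$, and then estimate the difference termwise using the uniform bounds on $\dff\T^j$ and $\hat u$ to localize each nonlinearity to a compact set on which it is Lipschitz.

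Observe first that $(\T\circ\tm)_\#\rfd=\T_\#(\tm_\#\rfd)=\T_\#\hat u$, so \eqref{eq:vvelo} reads $\velo(\T;\tm)=\uelo[\T_\#\hat u]\circ\T$. Applying \eqref{eq:evans2} with $\T$ in place of the transport map and $\hat u$ in place of the reference density yields
\begin{align*}
  \velo(\T;\tm) = -\frac{1}{\hat u}\div\!\left[\prss\!\left(\frac{\hat u}{\det\dff\T}\right)(\dff\T)^\#\right] - (\nabla V)\circ\T.
\end{align*}
By Piola's identity, the rows of $(\dff\T)^\#$ are divergence-free, so the bracketed divergence collapses to $(\dff\T)^\#\,\nabla\!\left[\prss\!\left(\hat u/\det\dff\T\right)\right]$. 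Expanding the inner gradient brings in $\nabla\hat u$ and $\nabla\det\dff\T$, the latter being polynomial in $\dff\T$ and linear in $\dff^2\T$. Hence $\velo(\T;\tm)$ is an explicit and jointly smooth function of $\hat u$, $\nabla\hat u$, $\dff\T$, $\dff^2\T$, and $\nabla V\circ\T$.

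Next I would exploit the hypotheses to localize. Under $\lambda\eins\le\dff\T^j\le\lambda^{-1}\eins$, the Jacobians lie in a compact set of positive definite matrices on which $A\mapsto A^\#$, $A\mapsto\det A$ and $A\mapsto 1/\det A$ are smooth and hence Lipschitz, and the argument $\hat u/\det\dff\T^j$ takes values in a compact subinterval of $\setRpp$ where $\prss$ and $\prss'$ are smooth and bounded. Writing $\velo(\T^1;\tm)-\velo(\T^2;\tm)$ as a telescoping sum and applying the mean value theorem to each smooth nonlinearity produces the pointwise estimate
\begin{align*}
  |\velo(\T^1;\tm)(x)-\velo(\T^2;\tm)(x)|\le L'\,\bigl(\|\T^1-\T^2\|_{L^\infty}+\|\dff\T^1-\dff\T^2\|_{L^\infty}+\|\dff^2\T^1-\dff^2\T^2\|_{L^\infty}\bigr),
\end{align*}
with a constant $L'$ depending only on $\lambda$, $\|V\|_{C^2}$, $\|\hat u\|_{C^3}$, $\|\T^1\|_{C^2}$ and $\|\T^2\|_{C^2}$; the potential contribution $(\nabla V)\circ\T^1-(\nabla V)\circ\T^2$ is absorbed via $\|\nabla^2V\|_{L^\infty}\|\T^1-\T^2\|_{L^\infty}$. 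The right-hand side is bounded by $L'\|\T^1-\T^2\|_{C^2}$, and integration against $\hat u\dd x\le\lambda^{-1}|\Omega|$ converts this pointwise bound into the desired $L^2(\hat u)$-estimate \eqref{eq:vLip}.

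The main obstacle is not analytical depth but bookkeeping: one must verify that every factor appearing after the Piola reduction depends on $\T$ only through $\dff\T$ and $\dff^2\T$, confirm that every nonlinearity is restricted to a compact set on which it is Lipschitz, and track which norms of $\hat u$ and $\T^j$ contribute to the final constant. Beyond the chain rule, Piola's identity, and continuity of smooth functions on compact sets, no new ideas are required.
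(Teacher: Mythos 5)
Your proposal is correct and follows essentially the same route as the paper: it writes $\velo(\T;\tm)$ as an explicit nonlinear differential operator in $\dff\T$ and $\dff^2\T$ with coefficients depending smoothly on $\hat u$ and $V$, localizes all nonlinearities to compact sets using the uniform ellipticity bounds on $\dff\T^j$ and $\hat u$, and integrates a pointwise Lipschitz estimate. The paper arrives at the same explicit formula through the $h'$-form $\Psi(s)=-(\dff\T_s)^{-T}\nabla h'(\hat u/\det\dff\T_s)-\nabla V\circ\T_s$ along the linear interpolation $\T_s=s\T^1+(1-s)\T^2$ and computes $\Psi'(s)$ directly (which is the worked-out version of your telescoping/MVT step); your Piola reduction of \eqref{eq:evans2} is algebraically equivalent since $\frac{1}{w}\nabla\prss(w)=\nabla h'(w)$.
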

\begin{proof}
  Introduce the linear interpolation $\T_s=s\T^1+(1-s)\T^2$.
  Clearly, $s\mapsto\T_s$ is a smooth curve in $C^2(\Omega)$, 
  and we have for all $s\in[0,1]$:
  \begin{align}
    \label{eq:interprop}
    \|\T_s\|_{C^2}\le \max\big(\|\T^1\|_{C^2},\|\T^2\|_{C^2}\big), 
    \quad
    \lambda\eins\le\dff\T_s\le\lambda^{-1}\eins.
  \end{align}
  Hence the curve $\Psi:[0,1]\to L^2(\hat u)$ with
  \begin{align*}
    \Psi(s)=\velo(\T_s;\tm) 
    = -\nabla\big[h'\big((\T_s)_\#\hat u\big)+V\big]\circ\T_s
    = -(\dff\T_s)^{-T}\nabla h'\left(\frac{\hat u}{\det\dff\T_s}\right) - \nabla V\circ\T_s
  \end{align*}
  is continuously differentiable, with derivative
  \begin{align*}
    \Psi'(s)
    &= (\dff\T_s)^{-T}(\dff\T^1-\dff\T^2)^T(\dff\T_s)^{-T}\nabla h'\left(\frac{\hat u}{\det\dff\T_s}\right) \\
    &\quad + (\dff\T_s)^{-T}\nabla\left(\left(\frac{\hat u}{\det\dff\T_s}\right)h''\left(\frac{\hat u}{\det\dff\T_s}\right)\tr\left[(\dff\T_s)^{-1}(\dff\T^1-\dff\T^2)\right]\right) \\
    &\quad + \nabla^2V\circ\T_s\cdot(\T^1-\T^2).
  \end{align*}
  Above, the $\T_s$ are differentiated at most twice in space.
  Using \eqref{eq:interprop} 
  and the fact that $r\mapsto h'(r)$ and $r\mapsto r h''(r)$ are smooth with bounded derivatives for $r\in[\lambda^{d+1},\lambda^{-(d+1)}]$, 
  it is easy to conclude that $\Psi'(s):\Omega\to\setR^d$ is actually a continuous function for each $s\in[0,1]$, 
  with
  \begin{align*}
    |\Psi'(s)| \le L\|\T^1-\T^2\|_{C^2}.
  \end{align*}
  Integrate this inequality with respect to $s\in[0,1]$ to obtain \eqref{eq:vLip}.
\end{proof}
\begin{lem}
  \label{lem:convex}
  Assume that $\T^1,\T^2\in\nabla\xspc_\theh^+$ satisfy the set of estimates
  \begin{align}
    \label{eq:mono1}
    \left|\left\langle\frac{\T^j-\id}\tau - \velo(\T^j;\tm),\nabla\psi\right\rangle_{L^2(\tm_\#\rfd)}\right|
    \le \rho_j\|\nabla\psi\|_{L^2(\tm_\#\rfd)}
  \end{align}
  for all $\psi\in\txspc_\theh$, with constants $\rho_1,\rho_2\ge0$.
  Then
  \begin{align}
    \label{eq:resest}
    \|\T^1-\T^2\|_{L^2(\tm_\#\rfd)} \le \frac\tau{1+\tau\lambda}(\rho_1+\rho_2).
  \end{align}
\end{lem}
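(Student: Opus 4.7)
The plan is to test the two variational inequalities against the difference $\T^1-\T^2$ itself, and combine the resulting estimate with the $\lambda$-monotonicity of the vector field $\velo$ from Corollary~\ref{cor:lambda}.

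The first step is to observe that the difference $\T^1-\T^2$ is itself a gradient lying in the admissible space of test vectors $\nabla\txspc_\theh$. Indeed, since $\T^j\in\nabla\xspc_\theh^+$, we may write $\T^j=\nabla\phi_j$ with $\phi_j\in\xspc_\theh^+$, and by definition of $\xspc_\theh$, each $\phi_j$ has the form $\phi_j=\basQ+\psi_j$ with $\psi_j\in\txspc_\theh$. Hence $\T^j-\id=\nabla\psi_j$, and consequently $\T^1-\T^2=\nabla\psi$ with the admissible choice $\psi:=\psi_1-\psi_2\in\txspc_\theh$.

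Second, I plug this particular $\psi$ into both instances of \eqref{eq:mono1} and combine them. Writing $a_j:=\frac{\T^j-\id}\tau-\velo(\T^j;\tm)$, the triangle inequality yields
\begin{align*}
  \bigl|\langle a_1-a_2,\T^1-\T^2\rangle_{L^2(\tm_\#\rfd)}\bigr|
  \le (\rho_1+\rho_2)\|\T^1-\T^2\|_{L^2(\tm_\#\rfd)}.
\end{align*}
Expanding the inner product on the left-hand side gives
\begin{align*}
  \frac1\tau\|\T^1-\T^2\|_{L^2(\tm_\#\rfd)}^2
  - \langle\velo(\T^1;\tm)-\velo(\T^2;\tm),\T^1-\T^2\rangle_{L^2(\tm_\#\rfd)}
  \le (\rho_1+\rho_2)\|\T^1-\T^2\|_{L^2(\tm_\#\rfd)}.
\end{align*}

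Third, I invoke Corollary~\ref{cor:lambda} to control the velocity difference from below by $\lambda\|\T^1-\T^2\|_{L^2(\tm_\#\rfd)}^2$. This upgrades the previous inequality to
\begin{align*}
  \left(\frac1\tau+\lambda\right)\|\T^1-\T^2\|_{L^2(\tm_\#\rfd)}^2
  \le (\rho_1+\rho_2)\|\T^1-\T^2\|_{L^2(\tm_\#\rfd)}.
\end{align*}
Dividing through by $\|\T^1-\T^2\|_{L^2(\tm_\#\rfd)}$ (the conclusion is trivial when this quantity vanishes) and using $\lambda+\tau^{-1}>0$ yields the asserted bound. The only mildly delicate point is the admissibility check in the first step, and the fact that $1+\tau\lambda>0$ is exactly the smallness-of-$\tau$ condition already present in Theorem~\ref{thm:well-posed}; no further analytic obstacle arises.
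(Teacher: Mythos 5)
Your proposal is correct and follows essentially the same route as the paper's proof: test the variational inequalities against $\psi$ with $\nabla\psi=\T^1-\T^2$, invoke Corollary~\ref{cor:lambda} for the $\lambda$-monotonicity of $-\velo$, and divide. The only refinement you add is the explicit admissibility check that $\T^1-\T^2=\nabla(\psi_1-\psi_2)$ with $\psi_1-\psi_2\in\txspc_\theh$, which the paper leaves implicit.
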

\begin{proof}
  Subtracting the respective inequalities \eqref{eq:mono1} for the same $\psi\in\txspc_\theh$ from each other 
  yields
  \begin{align*}
    \left|\left\langle\frac{\T^1-\T^2}\tau 
    - \left(\velo(\T^1;\tm)-\velo(\T^2;\tm)\right),\nabla\psi\right\rangle_{L^2(\tm_\#\rfd)}\right|
    \le (\rho^1+\rho^2)\|\nabla\psi\|_{L^2(\tm_\#\rfd)}.
  \end{align*}
  We choose $\psi\in\xspc_\theh$ such that $\T^1-\T^2=\nabla\psi$;
  with Corollary \ref{cor:lambda} of $\nrjx$'s $\lambda$-convexity,
  we arrive at
  \begin{align*}
    (\rho^1+\rho^2)\|\T^1-\T^2\|_{L^2(\tm_\#\rfd)}
    &\ge \left|\frac1\tau\|\T^1-\T^2\|^2_{L^2(\tm_\#\rfd)} 
      - \left\langle \T^1-\T^2,\velo(\T^1;\tm)-\velo(\T^2;\tm)\right\rangle_{L^2(\tm_\#\rfd)}\right| \\
    &\ge \left(\frac1\tau+\lambda\right) \|\T^1-\T^2\|^2_{L^2(\tm_\#\rfd)}.
  \end{align*}
  Division by the norm of $\T^1-\T^2$ yields \eqref{eq:resest}.
\end{proof}

\subsection{Weak convergence}
\label{sct:consist}
\begin{proof}[Proof of Theorem \ref{thm:consist}]
  We shall prove the following estimate,
  \begin{align}
    \label{eq:induct}
    \wass(u_{n\tau},u_\Delta^n) \le \big(1+\bigO(\tau)\big)\wass(u_{(n-1)\tau},u_\Delta^{n-1}) + \tau\bigO(\tau+\rho_\theh).
  \end{align}
  The result \eqref{eq:theresult} then trivially follows by induction on $n$.

  Hence fix some $n\in\setN$.
  Recall that $u_\Delta^n=(\T_\Delta^n)_\#\hat u$,
  where he have introduced $\hat u:=u_\Delta^{n-1}$ for brevity. 
  Define further the flow maps $\tilde\T_{(\cdot)}$ as in \eqref{eq:tildeflow},
  and the density $u_\tau^*=\proj_\theh[\tilde\T_\tau]_\#\hat u$.
  Now we apply the triangle inequality as follows:
  \begin{align}
    \label{eq:threeterms}
    \wass(u_{n\tau},u_\Delta^n) \le \wass(u_{n\tau},\tilde u_\tau)+\wass(\tilde u_\tau,\tilde u_\tau^*)+\wass(\tilde u_\tau^*,u_\Delta^n),
  \end{align}
  and we shall estimate the three terms on the right-hand side separately.
  The first one is easy:
  since \eqref{eq:eq} defines a $\lambda$-contractive gradient flow in $\wass$,
  \begin{align*}
    \wass(u_{n\tau},\tilde u_\tau) \le e^{-\lambda\tau}\wass(u_{(n-1)\tau},u_\Delta^{n-1}).
  \end{align*}
  To estimate the second term on the right-hand side of \eqref{eq:threeterms}, 
  recall that $\tilde u_\tau=(\tilde\T_\tau)_\#\hat u$ and $\tilde u_\tau^*=\proj_\theh[\tilde\T_\tau]_\#\hat u$.
  We thus have, with the help of \eqref{eq:Ttau2}:
  \begin{align*}
    \wass(\tilde u_\tau,\tilde u_\tau^*)
    \le \|\tilde\T_\tau-\proj_\theh[\tilde\T_\tau]\|_{L^2(\hat u)}
    \le \|\tilde\T_\tau-\proj_\theh[\tilde\T_\tau]\|_{C^0}
    \le \tau\left\|\frac{\tilde{\T}_\tau-\proj_\theh[\tilde{\T}_\tau]}\tau\right\|_{C^2} \le C\tau(\tau+\rho_\theh).
  \end{align*}
  Finally, to control the third distance in \eqref{eq:threeterms}, 
  we use that $\tilde u_\tau^*=\proj_\theh[\tilde\T_\tau]_\#\hat u$ and that $u_\Delta^n=(\T_\Delta^n)_\#\hat u$,
  so that
  \begin{align*}
    \wass(\tilde u_\tau^*,u_\Delta^n) \le \|\proj_\theh[\tilde\T_\tau]-\T_\Delta^n\|_{L^2(\hat u)}.
  \end{align*}
  Here we wish to apply Lemma \ref{lem:convex} above, with $\T^1=\proj_\theh[\tilde{\T}_\tau]$ and $\T^2=\T_\Delta^n$.
  Clearly, $\rho_2=0$.
  To estimate $\rho_1$, we provide a bound on each of the three terms on the right-hand side of
  \begin{align*}
    \left\|\frac{\proj_\theh[\tilde{\T}_\tau]-\id}\tau-\velo(\proj_\theh[\tilde{\T}_\tau];\tm_\Delta^{n-1})\right\|_{L^2(\hat u)}
    &\le \begin{cases}
      \frac1\tau\left\|\proj_\theh[\tilde{\T}_\tau]-\tilde{\T}_\tau\right\|_{L^2(\hat u)} \\
      + \left\|\frac{\tilde{\T}_\tau-\id}\tau-\velo(\tilde{\T}_\tau;\tm_\Delta^{n-1})\right\|_{L^2(\hat u)} \\
      +\left\|\velo(\tilde{\T}_\tau;\tm_\tau^{n-1})-\velo(\proj_\theh[\tilde{\T}_\tau];\tm_\Delta^{n-1})\right\|_{L^2(\hat u)}.
    \end{cases}
  \end{align*}
  For the first term above, we have, again by \eqref{eq:Ttau2}, that
  \begin{align*}
    \frac1\tau\left\|\proj_\theh[\tilde{\T}_\tau]-\tilde{\T}_\tau\right\|_{L^2(\hat u)} \le C(\tau+\rho_\theh).
  \end{align*}
  To estimate the second term, recall that the relation \eqref{eq:helpy} holds,
  and therefore \eqref{eq:Ttau} yields
  \begin{align*}
    \left\|\frac{\tilde{\T}_\tau-\id}\tau-\velo(\tilde{\T}_\tau;\tm_\Delta^{n-1})\right\|_{L^2(\hat u)}
    \le \|\tau\zeta\|_{L^2(\hat u)} \le \tau\|\zeta\|_{C^0} \le \tau\|\zeta\|_{C^3} \le B'\tau.
  \end{align*}
  In the final step, we use the Lipschitz continuity \eqref{eq:vLip} of the map $\velo(\;\cdot\;;\tm_\Delta^{n-1})$ from $C^2$ to $L^2(\hat u)$,
  and apply \eqref{eq:Ttau2} again:
  \begin{align*}
    \left\|\velo(\tilde{\T}_\tau;\tm_\Delta^{n-1})-\velo(\proj_\theh[\tilde{\T}_\tau];\tm_\Delta^{n-1})\right\|_{L^2(\hat u)}
    \le L \|\tilde{\T}_\tau-\proj_\theh[\tilde{\T}_\tau]\|_{C^2}
    \le LC\tau(\tau+\rho_\theh).
 \end{align*}
 This concludes the proof of \eqref{eq:induct}, 
 from which the final result \eqref{eq:theresult} can be easily deduced.
\end{proof}

%
\section{Implementation}
\label{sct:implement}
%
In practice, performing one time step in the iterative scheme means to find the unique minimizer $\T^n_\Delta\in\nabla\xspc^+_\theh$ 
of the convex functional
\begin{align}
  \label{eq:num-el}
  \T \mapsto 
  \frac1{2\tau}\intom \big|\T\circ\tm^{n-1}_\Delta-\tm^{n-1}_\Delta\big|^2\,\rfd\dd x
  + \intom h_*\left(\frac{\det\dff\tm_\Delta^{n-1}}{\rfd}\,\big(\det\dff\T\big)\circ\tm_\Delta^{n-1}\right)+V(\T\circ\tm_\Delta^{n-1})\,\rfd\dd x.
\end{align}
Although this minimization problem is finite-dimensional,
its numerical solution is not immediate, 
since the integrals cannot be evaluated explicitly.
Even if $\rfd$ and $h_*$ happen to be ``nice'', the main difficulty remains, 
namely the appearance of $\tm_\Delta^{n-1}$, 
which is an $(n-1)$-fold concatenation of transport maps $\T_\Delta^m$ from $\nabla\xspc^+_\theh$. 

Therefore, a further simplification of the minimization problem is needed.
Our approach is to replace the integrals in \eqref{eq:num-el} by suitable quadrature rules.
That is, the integrands are evaluated only at a fixed number $K$ of quadrature points $\check x_k\in\Omega$,
and these values are then summed against given weights $\check\omega_k>0$.  These are chosen as follows: Fix two positive integers $K_1$ and $K_2$.
Decompose $\Omega=[0,1]^d$ into $K_1^d$-many cubes of equal size.
It is sensible to choose $K_1$ proportional to the number $\ell$ of modes per direction.
On each of these cubes, introduce $K_2^d$-many quadrature points using a one-dimensional quadrature rule on a tensor product grid.

The ``complicated'' function $\tm_\Delta^{n-1}$ appears in two ways when using quadrature on \eqref{eq:num-el}:
first, in the positions $\check x^{n-1}_{\Delta,k}:=\tm_\Delta^{n-1}(\check x_k)$,
and second, in the determinants $\check\sigma^{n-1}_{\Delta,k}:=\det\dff\tm_\Delta^{n-1}(\check x_k)$.
Fortunately, these quantities can easily be determined by iteration, using that $\tm_\Delta^n=\T_\Delta^n\circ\tm_\Delta^{n-1}$:
\begin{align*}
  \check x^0_{\Delta,k}=\check x_k,\quad &\check x^n_{\Delta,k}=\T_\Delta^{n}(\check x^{n-1}_{\Delta,k}), \\
  \check\sigma^0_{\Delta,k}=1, \quad &\check\sigma^n_{\Delta,k}=\det\dff\T_\Delta^{n}(\check x^{n-1}_{\Delta,k})\,\check\sigma^{n-1}_{\Delta,k}.
\end{align*}
Notice that this iteration is explicit in the sense that the quantities $\check x^n_{\Delta,k}$ and $\check\sigma^n_{\Delta,k}$
can be computed (in order to determine $\T^{n+1}_{\Delta,k}$ in the next time step)  \emph{after} the minimizer $\T^n_{\Delta,k}$ in \eqref{eq:num-el} has been obtained.
The functional in \eqref{eq:num-el} now attains the form
\begin{align}
  \label{eq:num-el2}
  \T\mapsto
  \frac1{2\tau} \sum_k\big|\T(\check x^{n-1}_{\Delta,k})-\check x^{n-1}_{\Delta,k}\big|^2\,\rfd(\check x_k)\check\omega_k
  + \sum_k h_*\left(\frac{\check\sigma^{n-1}_{\Delta,k}}{\rfd(\check x_k)}\,\det\dff\T(\check x^{n-1}_{\Delta,k})\right)+V(\T(\check x^{n-1}_{\Delta,k}))\;\rfd(\check x_k)\check\omega_k,
\end{align}
with the quantities $\check x^{n-1}_{\Delta,k}$ and $\check\sigma^{n-1}_{\Delta,k}$, 
that have been calculated in the previous time step.
Observe that in \eqref{eq:num-el2}, 
the reference densities $\rfd(\check x_k)$ are constants, 
and that $\T(\check x^{n-1}_{\Delta,k})$ and $\det\dff\T(\check x^{n-1}_{\Delta,k})$ are easy to compute for any given $\T\in\nabla\xspc^+_\theh$.
\begin{lem}
  The minimization problem \eqref{eq:num-el2} is strictly convex in $\T\in\nabla\xspc_\theh$.
  There exists a minimizer, which is unique if it belongs to $\nabla\xspc^+_\theh$.
\end{lem}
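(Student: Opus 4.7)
The strategy is to mimic the proof of Theorem~\ref{thm:well-posed}, transferring the convexity arguments from the integrals in \eqref{eq:num-el} to the quadrature sums in \eqref{eq:num-el2}. I would first verify convexity of each summand in \eqref{eq:num-el2} separately, at a fixed quadrature point $\check x^{n-1}_{\Delta,k}$, and then add these up against the positive weights $\rfd(\check x_k)\check\omega_k$. The piece $\T\mapsto|\T(\check x^{n-1}_{\Delta,k})-\check x^{n-1}_{\Delta,k}|^2$ is a non-negative quadratic form in the Fourier coefficients parametrizing $\nabla\xspc_\theh$, hence convex. The potential piece $\T\mapsto V(\T(\check x^{n-1}_{\Delta,k}))$ inherits $\lambda$-convexity from $V$, since $\T\mapsto\T(\check x^{n-1}_{\Delta,k})$ is an affine map on $\nabla\xspc_\theh$. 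And the internal-energy piece
\[
  \T\mapsto h_*\!\left(\frac{\check\sigma^{n-1}_{\Delta,k}}{\rfd(\check x_k)}\,\det\dff\T(\check x^{n-1}_{\Delta,k})\right)
\]
is convex by the Alexandrov--Hadwiger argument from the proof of Proposition~\ref{prp:convex}: along a line $\T_s=(1-s)\T_0+s\T_1$ between two elements of $\nabla\xspc^+_\theh$, concavity of $s\mapsto(\det\dff\T_s(\check x^{n-1}_{\Delta,k}))^{1/d}$ combines with the convexity and monotone decrease of $h_*$ on $\setRpp$ to give convexity of the composition.

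Summing the three contributions with positive weights yields a convex functional on $\nabla\xspc_\theh$. Strict convexity on $\nabla\xspc^+_\theh$ then follows because the $\tau^{-1}$-weighted quadratic term provides a positive-definite contribution to the Hessian, provided the quadrature is rich enough that $\T\mapsto(\T(\check x^{n-1}_{\Delta,k}))_k$ separates points of $\nabla\xspc_\theh$; this is where the numbers $K_1,K_2$, chosen proportional to the number of Fourier modes, enter the argument and it is the principal technical point that needs to be made precise.

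For existence, I would extend $\Phi$ to all of $\nabla\xspc_\theh$ by setting $\Phi(\T)=+\infty$ whenever $\det\dff\T$ vanishes at some quadrature point, consistent with $h_*(0^+)=+\infty$ under the standing hypothesis of Theorem~\ref{thm:well-posed}. Then $\Phi$ is lower semicontinuous and bounded below ($h_*\ge 0$, $V$ continuous on $\overline\Omega$), and $\nabla\xspc_\theh$ is a compact convex subset of a finite-dimensional coefficient space --- bounded because $\nabla\phi(\Omega)\subseteq\Omega$, closed because convexity and containment are closed conditions. Moreover $\Phi(\id)$ is finite, since $\id=\nabla\basQ\in\nabla\xspc^+_\theh$ and the quantities $\check\sigma^{n-1}_{\Delta,k}>0$ are inherited inductively from the regularity of $\tm_\Delta^{n-1}$ established in Theorem~\ref{thm:well-posed}. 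A minimizer $\T^*$ therefore exists; if $\T^*\in\nabla\xspc^+_\theh$, then any competing minimizer must also lie in $\nabla\xspc^+_\theh$ (since $\Phi=+\infty$ elsewhere), and strict convexity along the segment between them forces the two to coincide.

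The main obstacle I anticipate is precisely the \emph{strict} part of strict convexity: pointwise convexity of each summand is routine, but excluding flat directions in the finite-dimensional ansatz space requires ensuring that the quadrature samples $\T$ and $\dff\T$ at enough points to separate the modes of $\txspc_\theh$. Under the tensor-product setup proposed in the text this separation is expected to hold, but it amounts to a non-degeneracy statement on a Vandermonde-like matrix built from the Fourier basis $\{\bas_\ki\}_{\ki\in\I_\theh}$ evaluated at the quadrature grid, which must be checked to complete the argument.
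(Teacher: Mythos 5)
Your proposal follows the same route as the paper: verify pointwise convexity of the three summands at each quadrature point, with the Alexandrov--Hadwiger determinant argument for the internal-energy piece, and obtain existence from compactness of $\nabla\xspc_\theh$ together with lower semicontinuity of the extended functional. That is exactly what the paper's (terse) proof indicates by referring back to Proposition~\ref{prp:convex} and remarking that the integral is replaced by a summation, and your compactness argument parallels the one in the proof of Theorem~\ref{thm:well-posed}.

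The gap you flag is real, and the paper's proof does not address it either. The internal-energy and potential summands are merely convex (respectively $\lambda$-convex), so any \emph{strict} convexity must come from the quadratic term, which on $\nabla\xspc_\theh$ reduces to the quadratic form
\begin{align*}
  \psi\mapsto\frac1{2\tau}\sum_k\big|\nabla\psi(\check x^{n-1}_{\Delta,k})\big|^2\,\rfd(\check x_k)\check\omega_k,
  \qquad \psi\in\txspc_\theh.
\end{align*}
This is positive definite only if the evaluation map $\psi\mapsto\big(\nabla\psi(\check x^{n-1}_{\Delta,k})\big)_k$ is injective on $\txspc_\theh$. For $n\ge2$ the points $\check x^{n-1}_{\Delta,k}=\tm_\Delta^{n-1}(\check x_k)$ no longer sit on a structured tensor grid, so this injectivity is a nondegeneracy hypothesis that must be imposed or checked, not a consequence of the construction; the Vandermonde-type condition you mention would only dispose of it at $n=1$. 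The paper's uniqueness step (the remark that $\xspc^+_\theh$ lies in the interior of $\xspc_\theh$) silently presupposes this strictness, so your observation pinpoints an implicit assumption in the paper rather than a defect specific to your own argument.
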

\begin{rmk}
  In contrast to Theorem \ref{thm:well-posed},
  it is not a conclusion that the minimizer $\T^*$ belongs to $\nabla\xspc^+_\theh$.
  The problem is that $\dff\T^*$ might in principle degenerate at any point $x\in\Omega$
  except for the $\check x_k$.
\end{rmk}
\begin{proof}
  Convexity follows along the same lines as in the proof of Proposition \ref{prp:convex}.
  Indeed, the core of the argument is the verification of the \emph{pointwise} convexity of the integrand in the definition $\nrjx$.
  In \eqref{eq:num-el2}, the integral is replaced by a summation.
  To obtain the uniqueness, it suffices to observe that $\xspc^+_\theh$ is in the interior of the convex set $\xspc^+_\theh$.
\end{proof}

\section{Numerical results}
\label{sct:numerics}
%
We present the results of a numerical experiment in space dimension $d=2$ and for a quadratic pressure function $\prss(s) = s^2$.
Even though this is a borderline case for which Theorem \ref{thm:well-posed} does \emph{not} guarantee 
the well-posedness of the time-discrete iteration, our scheme works well and produces positive bounded fully discrete solutions.
For the quadrature rule in \eqref{eq:num-el2}, we choose $K_1=2K$ and $K_2=2$. 
Thus, $\Omega=[0,1]^2$ is decomposed into $4K^2$ squares, 
with four quadrature points $\check x_k$ per square.
The nodes $\check x_k$ and the weights $\check\omega_k$ are chosen according to the Gau\ss{} quadrature rule.

\subsection{Reference solution}
In order to numerically estimate the order of convergence of our scheme,
we use a reference solution $u_\rf$ that is produced by a highly resolved FEM method with fully implicit time stepping.
As ansatz functions $\varphi_k$, we choose tensor products of hat functions with respect to a standard square lattice. 
The $n$th time iterate $u_\rf^n$ of the reference solution is then obtained by solving
\begin{align}
  \label{eq:FEM}
  \intom\frac{u - u_\rf^{n-1}}{\tau}\,\varphi_k \dd x
  = \intom \big[\nabla(u^2)+ u\nabla V\big]\cdot\nabla\varphi_k \dd x 
  \quad \text{for all $k$}
\end{align}
for $u = \sum_k u^n_k\varphi_k$.
We use $400$ lattice points in both spatial directions, and a time step of $\tau=5\cdot10^{-4}$.

\subsection{Numerical experiments}
\subsubsection{Qualitative behavior}
\label{exp1}
In the first series of experiments, 
we choose the potential $V$ as
\begin{align}
  \label{eq:V_exp1}
  V(x) = -\lambda\big(\cos(2\pi x_1)-1\big)\big(\cos(4\pi x_2)-1\big),
\end{align}
with $\lambda=0.75$.
We approximate solutions for the initial density
\begin{align}
  \label{eq:u0_exp1}
  u^0 = C\left(0.1 + x_1\big(\cos(4\pi x_1) - 1.2\big)\big(\cos(2\pi y_2)-1\big)\right),
\end{align}
where $C>0$ is such that $u^0$ has unit mass.

We employ our scheme with a relatively high spatial resolutions of $K=32$.
We visualize the result in Figure \ref{fig:fig1_2D}:
the left column displays the transport maps $\tm_\Delta^n$, 
and the right column shows the corresponding density functions $u_\Delta^n$ 
after various numbers $n$ of time iterations.
Qualitatively, the density evolves from the initial state that has two peaks of different height
to the an approximation of the stationary solution that has two peaks of equal height,
which are oriented in the orthogonal direction.
\begin{figure}
\begin{center}
  \gridplot{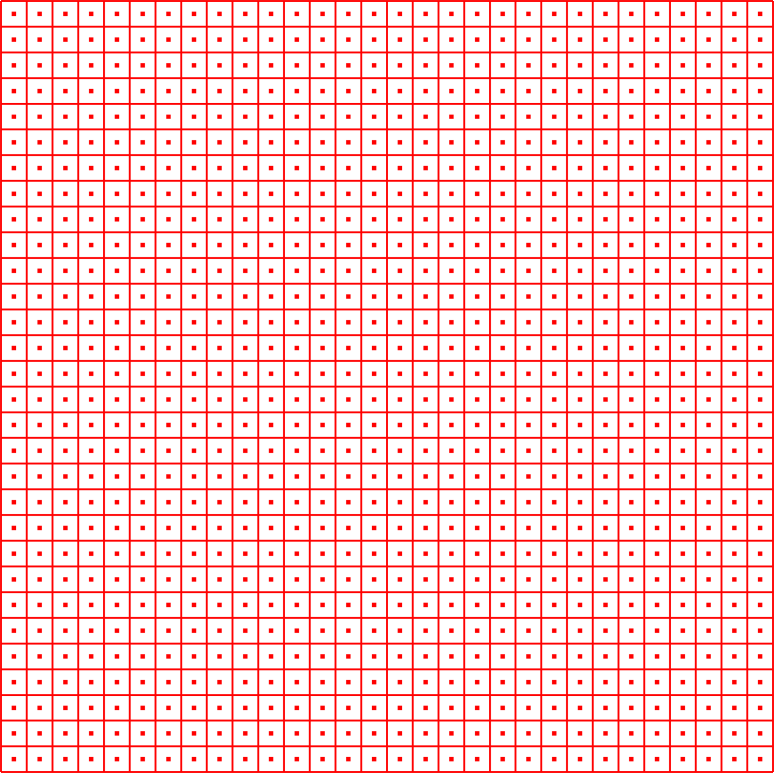}
  \gridplot{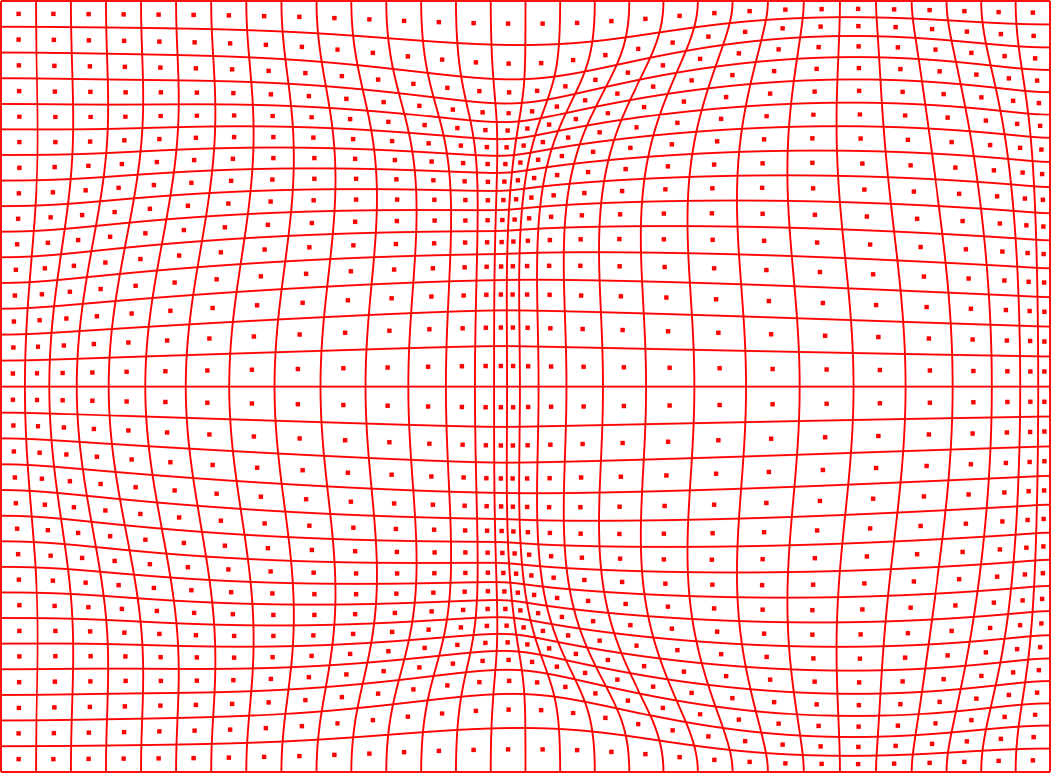}
  \gridplot{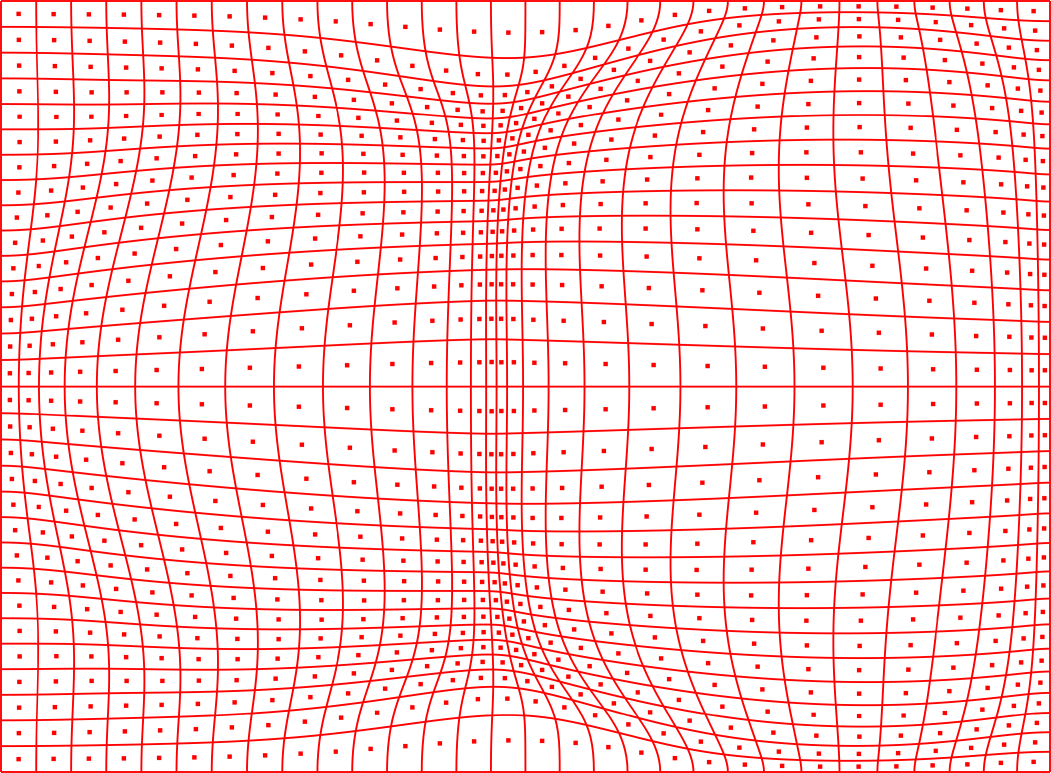}
  \gridplot{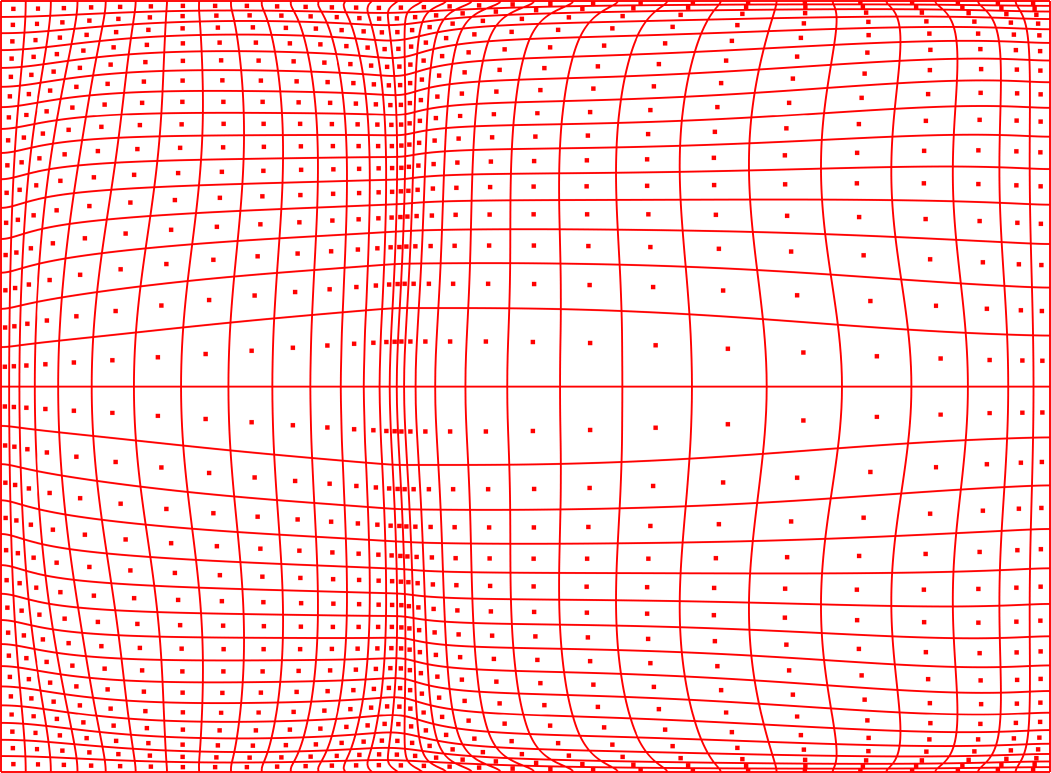}\\
  \densplot{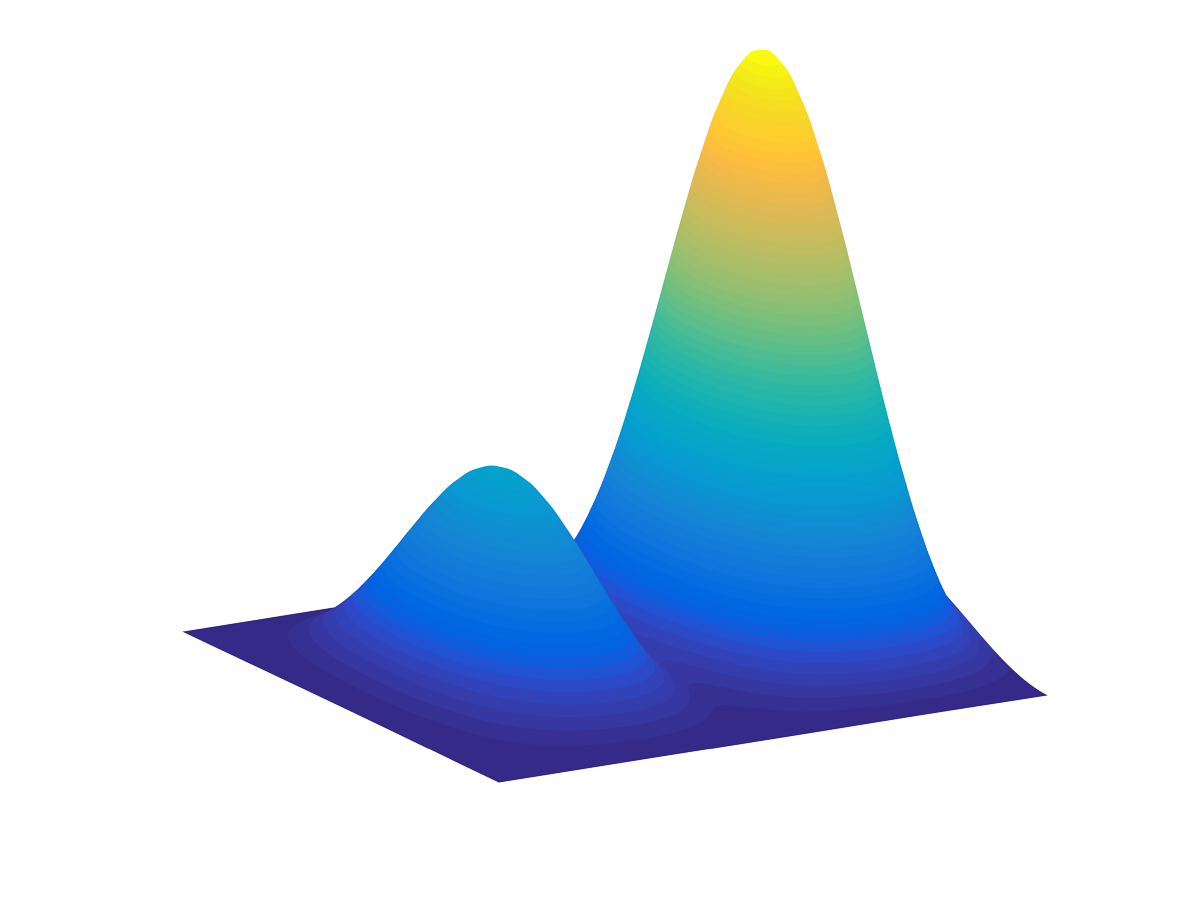}
  \densplot{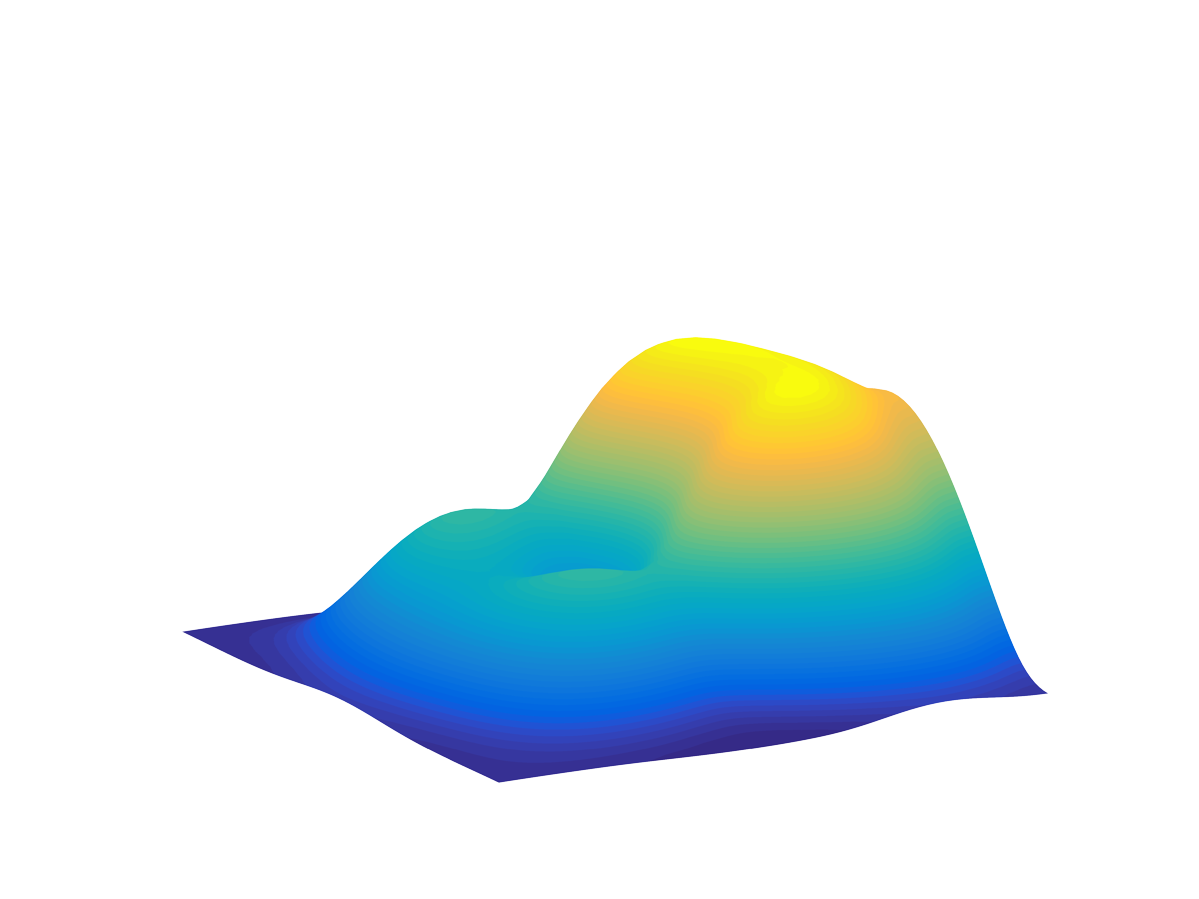}
  \densplot{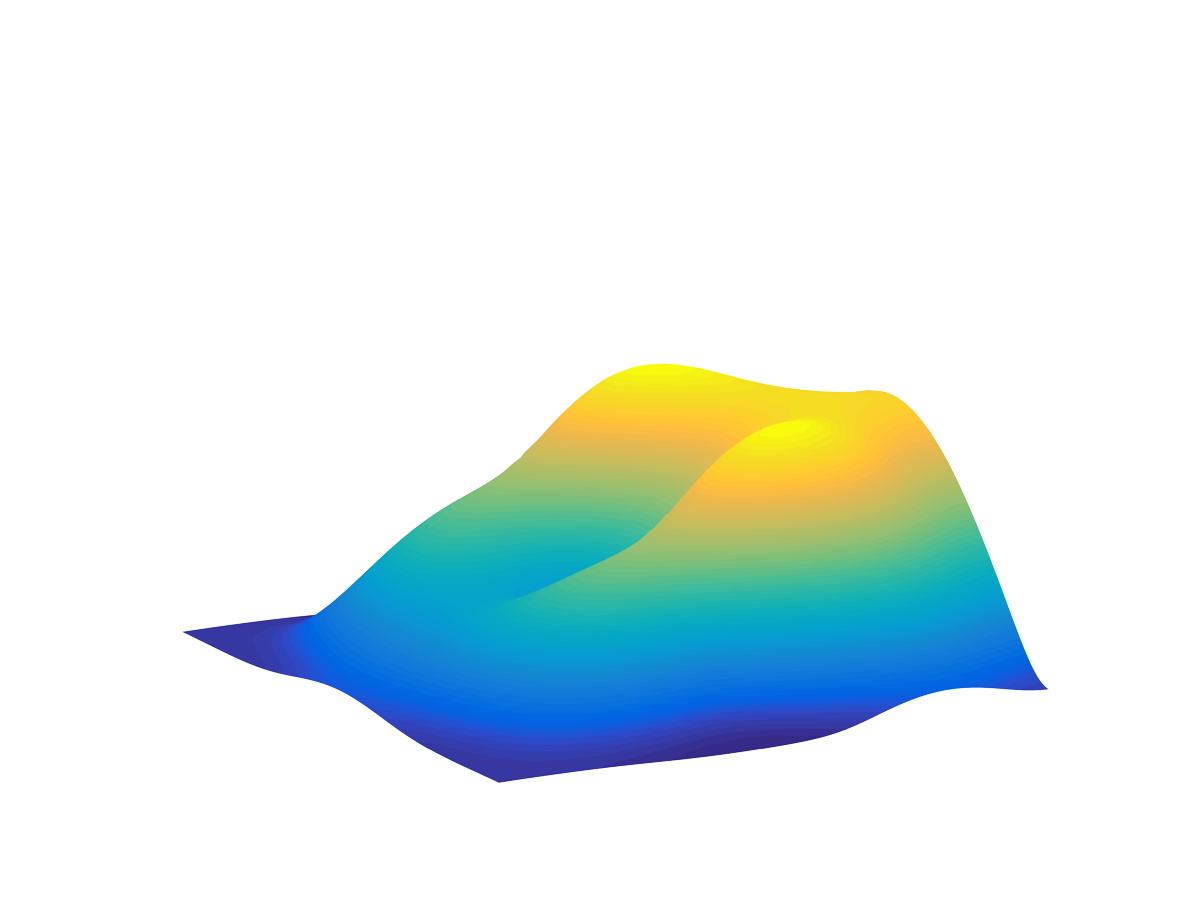}
  \densplot{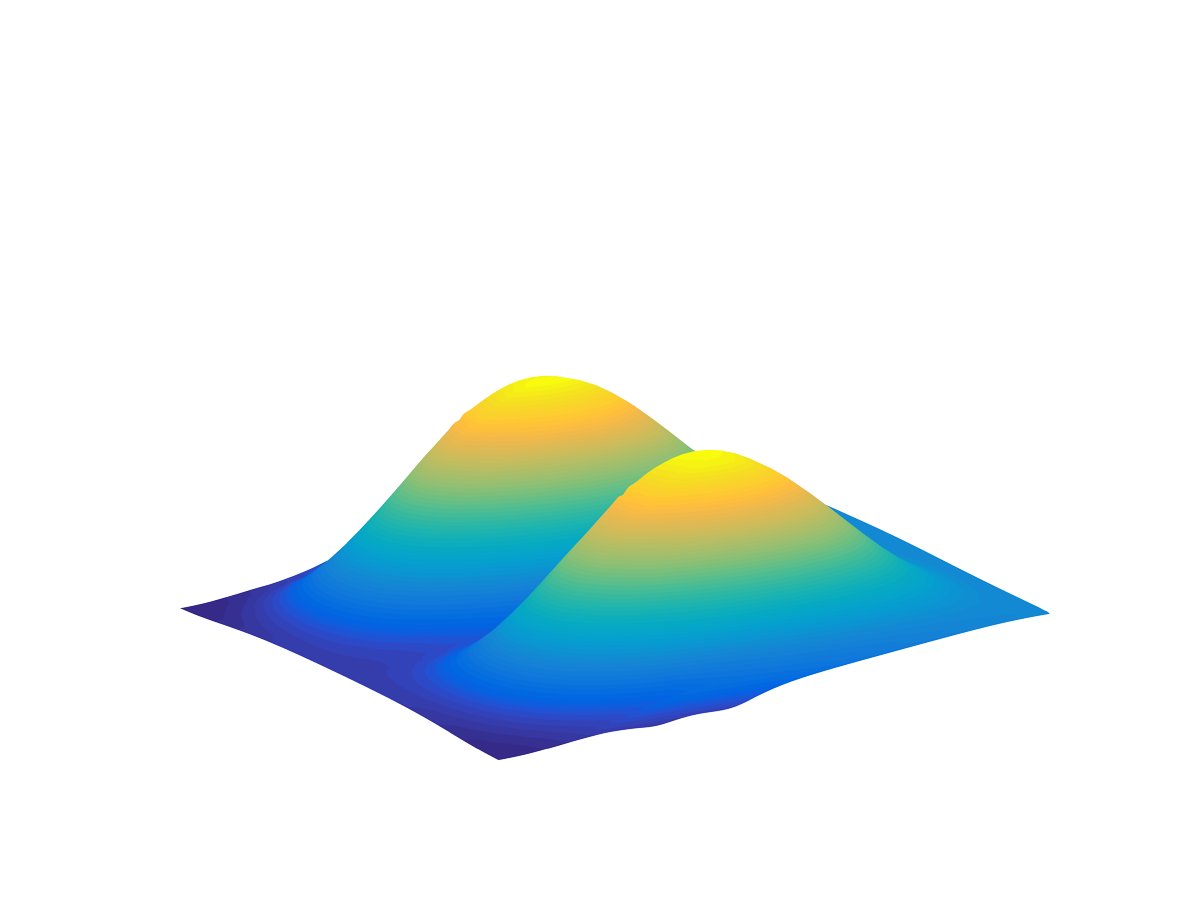}
\end{center}
\caption{Snapshots of the evolution of the fully discrete solution for the initial density \eqref{eq:u0_exp1},
  taken at times $t=0,2.5\cdot 10^{-3},4\cdot10^{-3}$ and $t=5\cdot10^{-2}$.
  \emph{Upper row:} transport maps $\tm_\Delta^n$;
  \emph{Lower row:} densities $u_\Delta^n$.}
\label{fig:fig1_2D}
\end{figure}
Figure \ref{fig:fig2_2D}/middle shows the correponding decay of the entropy $\nrjx$.
This curve is compared to the one obtained for $\anrj(u_\rf)$, 
using the reference solution with the same initial datum $u^0$.
The curves are virtually indistinguishable.

\subsubsection{Rate of convergence}
\label{exp3}
For a quantitative estimate on the accuracy of our numerical scheme,
we eliminate the external potential, $V\equiv0$,
and run our scheme with the same initial datum $u^0$ as above in \eqref{eq:u0_exp1},
with various different spatial resolutions, ranging from $\ell=8$ (coarsest) to $\ell=24$ (finest).
The time step width $\tau=5\cdot 10^{-4}$ is kept fixed.

These solutions are then compared to the reference solution $u_\rf$ 
that has been obtained by the finite element scheme \eqref{eq:FEM}.
In Figure \ref{fig:fig2_2D}/left, we have plotted the behavior of the $L^2$-norm of the difference,
evaluated at the time $T=0.01$.
The observed order of convergence is between two and three, i.e.,
the approximation error is roughly inverse proportional to the number $K^2$ of modes.
\begin{figure}[t]
  \hspace*{-1.5cm}
  \begin{tikzpicture}
	\tikzstyle{every node}=[font=\footnotesize]
    \begin{loglogaxis}[
      enlargelimits=false,
      width=0.39\textwidth,
      xlabel={number $K$ of modes},
      title={$\|u_\Delta(T)-u_\rf(T)\|_{L^2}$},
      xtick={4,8,12,16,20,24},
      xticklabel=\pgfmathparse{exp(\tick)}\pgfmathprintnumber{\pgfmathresult},
      mark size=1.5pt,
      grid,
      legend entries={$L^2$-error,$\mathcal{O}(K^{-2})$,$\mathcal{O}(K^{-3})$},
      legend style={at={(0.275,0)},anchor=south},
      ]
      \addplot[blue,mark=*] table {error_1.dat};
      \addplot[black,mark=+,domain=4:24,samples=20] {10*x^(-2)};
      \addplot[black,mark=o,domain=4:24,samples=20] {30*x^(-3)};
    \end{loglogaxis}
  \end{tikzpicture}\hspace*{-4mm}
  \begin{tikzpicture}
	\tikzstyle{every node}=[font=\footnotesize]
    \begin{axis}[
      enlargelimits=false,
      width=0.39\textwidth,
      xlabel={time $t$},
      xtick={0,0.02,0.04},
      mark size=1.5pt,
      grid,
      legend entries={GF,FEM},
      title = entropy
      ]
      \addplot[red,mark=*] table {decay_1.dat};
      \addplot[black,mark=o] table {decay_2.dat};
    \end{axis}
  \end{tikzpicture} 
    \begin{tikzpicture}
	\tikzstyle{every node}=[font=\footnotesize]
    \begin{semilogyaxis}[
      enlargelimits=false,
      width=0.39\textwidth,
      xlabel={time $t\sim n\tau$},
      xtick={0,0.025,0.05,0.075,0.1},
      mark size=1.5pt,
      grid,
      legend entries = {$\|\tm_{\Delta}^0 - \tm_{\Delta}^0\|_{L^2(u^0)}\cdot e^{-\lambda n\tau}$,
      $\|\tm_{\Delta}^n - \tm_{\Delta}^n\|_{L^2(u^0)}$},
      legend style={at={(0.5,0.05)},anchor=south},
      ymin=1e-4,
      title=error
      ]
      \addplot[red,thick,dashed] table {convex_2.dat};
      \addplot[black,thick,mark=o] table {convex_1.dat};
    \end{semilogyaxis}
  \end{tikzpicture}

  \caption{\emph{Left:} $L^2$-distance between the solution of our variational scheme and the reference solution  at the final time $T=0.01$, using $K=4,8,12,16,20,25$ modes.
    \emph{Middle:} Decay of the entropy. \emph{Right:} Exponential rate of decay in time of the $L^2$-distance between two particular fully discrete solutions.}
  \label{fig:fig2_2D}
\end{figure}

\subsubsection{Contractivity}\label{exp4}
Gradient flows of $\lambda$-convex functionals are $\lambda$-contractive.
In the context at hand, this means that if the potential $V$ in \eqref{eq:eq} is $\lambda$-convex,
and $u_1$, $u_2$ are two solutions to the initial value problem \eqref{eq:eq}--\eqref{eq:ic} 
with respective initial data $u_1^0$, $u_2^0$, 
then
\begin{align}
  \label{eq:wass_contraction}
  \wass(u_1(t),u_2(t)) \le \wass(u_1^0,u_2^0) e^{-\lambda t}.
\end{align}
Since our discretization preserves $\lambda$-convexity as detailed in Propostion \ref{prp:convex},
a corresponding contraction estimate holds --- in each time step, see Corollary \ref{cor:lambda}.
However, since the $L^2$-space on which the minimization is performed changes from one time step to the next, 
it is not clear how to iterate that estimate and obtain an analogue of \eqref{eq:wass_contraction} above.

A natural question to ask is if maybe a stronger contraction principle holds, 
namely if the $L^2$-distance (with respect to $\rfd$) between the transport maps $\tm_\Delta^n$ 
for two solutions satisfies the analogous estimate as \eqref{eq:wass_contraction} for the Wasserstein distance between densities.
In general, such a stronger principle is too much to ask for (recall that $\anrj$ is just poly-convex).
However, some numerical experiments suggest that this estimate does hold if the initial data are already close in a suitable sense.

For illustration, 
we have performed a numerical experiment for $V(x) = \frac{\lambda}{2}\|x\|_2^2$ with $\lambda=10$,
with two initial conditions $u_1^0$ and $u_2^0$ that are random perturbations of the unit density $\rfd=1$. 
More precisely, $u_j^0=(\tm_j^0)_\#\rfd$ for $j=1,2$, 
with $\tm_1^0,\tm_2^0\in\xspc_\theh$ being random perturbations of the identity.
The discretization parameters are $K=12$ and $\tau=10^{-3}$.
The result is given in Figure \ref{fig:fig2_2D}/right;
there is apparently an exponential contraction of the transport maps, 
with an exponential rate that is higher (but still comparable to) $\lambda$.

\subsubsection{Code}

In principle, an actual implementation of our scheme can be quite compact as the  conceptual code for the Julia language in Fig.\ \ref{fig:code} shows.  Note, however, that here we trade elegance for speed and that for the experiments reported on above we have actually used a much faster implementation. 

\begin{figure}
\begin{lstlisting}
using ForwardDiff
d = 2; K = 5                                                    # dimension, number of modes
g(k,x) = cos(pi*k*x)			
dg(k,x) = -pi*k*sin(pi*k*x)
ddg(k,x) = -pi^2*k^2*cos(pi*k*x)
gradb(k,x) = 2/(pi*norm(k))*[dg(k[1],x[1])*g(k[2],x[2]); g(k[1],x[1])*dg(k[2],x[2])]
Hb(k,x) = 2/(pi*norm(k))*[ddg(k[1],x[1])*g(k[2],x[2]) dg(k[1],x[1])*dg(k[2],x[2]);
                             dg(k[1],x[1])*dg(k[2],x[2]) g(k[1],x[1])*ddg(k[2],x[2])];
I = [[k1,k2] for k1=0:K, k2=0:K][2:end]
t(z,x) = x + sum([z[k[2]*(K+1)+k[1]]*gradb(k,x) for k = I])     # Lagrangian maps
Dt(z,x) = eye(2) + sum([z[k[2]*(K+1)+k[1]]*Hb(k,x) for k = I])
hs(r) = 1/r                                                     # pressure function
L = 20; dx = 1/(L-1); dy = dx; 									
X = [[x,y] for x = 0:dx:1, y = 0:dy:1]                          # space grid
om1 = ones(L); om1[1] = om1[L] = 1/2; 
om = [ a*b*dx^2  for a = om1, b = om1];                         # weights for quadrature
u0(x) = (0.1+x[1]*(cos(4*pi*x[1])-1.2)*(cos(2*pi*x[2])-1))
C = sum([u0(x)*dx^2 for x = X])
ub = [u0(X[k,l])/C for k=1:L, l=1:L]                            # reference density
V(x) = -0.75*(cos(2*pi*x[1])-1)*(cos(4*pi*x[2])-1)              # potential
si = ones(L,L)                                                  # determinants
tau = 5e-4                                                      # time step size
int(f) = sum(f.*ub.*om)
W(z) = int([sum((t(z,X[k,l])-X[k,l]).^2) for k=1:L, l=1:L])
E(z) = int([(hs(si[k,l]/ub[k,l]*det(Dt(z,X[k,l]))) + V(t(z,X[k,l]))) for k=1:L, l=1:L])
F(z) = 1/(2*tau)*W(z) + E(z) 
gradF = ForwardDiff.gradient(F)
HF = ForwardDiff.hessian(F)
z = zeros(size(I));
time = 0
while time < 5*1e-2                                             # time stepping
    while norm(gradF(z)) > 1e-3                                 # Newton iteration
        dz = -HF(z)\gradF(z)
       z = z + dz
    end
    si = [det(Dt(z,X[k,l]))*si[k,l] for k = 1:L, l = 1:L]       # updating determinants
    X = [t(z,X[k,l]) for k = 1:L, l = 1:L];                     # updating space grid
    time = time + tau
end
\end{lstlisting}
\caption{\label{fig:code}Conceptual code of the Lagrangian scheme in the Julia language.}
\end{figure}

%


\appendix

\section{Gradient flows}
\label{apx:flows}
\begin{lem}
  \label{lem:2equivalent}
  Let two functionals $\anrj:\dens\to\setR$ and $\nrjx:\maps\to\setR$ be given, 
  which satisfy $\nrjx(\tm)=\anrj(\tm_\#\rfd)$, for some reference density $\rfd$.
  Then the derivatives of these functionals at any given $\tm\in\maps$,
  are related as follows:
  \begin{align}
    \label{eq:W2L2}
    \nabla\isom{L^2(\dn x)}{\dff\anrj(\tm_\#\rfd)}\circ\tm = \isom{L^2(\rfd)}{\dff\nrjx(\tm)}.
  \end{align}
\end{lem}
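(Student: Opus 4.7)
The plan is to compute the first variation $\dff\nrjx(\tm)[\eta]$ directly from the definition $\nrjx(\tm)=\anrj(\tm_\#\rfd)$ via the chain rule, and then recognize the resulting expression as an $L^2(\rfd)$-pairing against $\nabla\isom{L^2(\dn x)}{\dff\anrj(\tm_\#\rfd)}\circ\tm$. Since a Riesz dual is uniquely determined by the values of the linear functional on a dense family of test directions, identifying both sides on all admissible $\eta$ will yield \eqref{eq:W2L2}.

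First I would fix $\tm\in\maps$, set $u=\tm_\#\rfd$, and pick a smooth perturbation $\eta:\Omega\to\setR^d$ that respects the boundary convention (e.g.\ symmetric-periodic, so that $\nrml\cdot\eta=0$ on $\sbdf\Omega$ and all integration by parts boundary terms vanish). Consider the one-parameter family $\tm_\epsilon := \tm + \epsilon\eta$, which still lies in $\maps$ for small $\epsilon$, and the associated densities $u_\epsilon := (\tm_\epsilon)_\#\rfd$. The standard continuity-equation computation for push-forwards gives
\begin{align*}
  \partial_\epsilon u_\epsilon\big|_{\epsilon=0} = -\div(u\,w), \qquad w := \eta\circ\tm^{-1},
\end{align*}
so that by the chain rule and the definition of $\isom{L^2(\dn x)}{\dff\anrj(u)}$,
\begin{align*}
  \dff\nrjx(\tm)[\eta] = \dff\anrj(u)\big[-\div(u\,w)\big]
  = -\intom \isom{L^2(\dn x)}{\dff\anrj(u)}(x)\,\div(u\,w)(x)\dd x.
\end{align*}

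Second, I would integrate by parts, using the boundary condition on $\eta$ (hence on $w$) to discard the boundary integral:
\begin{align*}
  \dff\nrjx(\tm)[\eta] = \intom \nabla\isom{L^2(\dn x)}{\dff\anrj(u)}(x)\cdot w(x)\,u(x)\dd x.
\end{align*}
Now I would perform the change of variables $x=\tm(y)$. Since $u=\tm_\#\rfd$ means $u(\tm(y))\det\dff\tm(y)=\rfd(y)$, and $w\circ\tm=\eta$, this yields
\begin{align*}
  \dff\nrjx(\tm)[\eta] = \intom \bigl(\nabla\isom{L^2(\dn x)}{\dff\anrj(u)}\circ\tm\bigr)(y)\cdot\eta(y)\,\rfd(y)\dd y.
\end{align*}

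Third, I would compare this with the defining identity of the Riesz dual in $L^2(\rfd)$, namely
\begin{align*}
  \dff\nrjx(\tm)[\eta] = \intom \isom{L^2(\rfd)}{\dff\nrjx(\tm)}(y)\cdot\eta(y)\,\rfd(y)\dd y,
\end{align*}
and conclude equality of the two $L^2(\rfd)$-representatives by the arbitrariness of $\eta$ in a dense class. The main obstacle, mild but worth care, is the boundary-term bookkeeping in the integration by parts: one must argue that admissible perturbations $\eta$ of maps in $\maps$ are tangent to $\sbdf\Omega$ (equivalently, symmetric-periodic in the extension sense), so that $\nrml\cdot(u\,w)=0$ on $\sbdf\Omega$ and no boundary contribution appears; the remaining steps are elementary calculus.
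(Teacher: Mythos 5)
Your argument is correct and follows essentially the same route as the paper's: compute the first variation of $\nrjx$ via the chain rule applied to $\nrjx(\tm)=\anrj(\tm_\#\rfd)$, pass from an $L^2(\dn x)$-pairing to an $L^2(\rfd)$-pairing by change of variables, and identify Riesz duals by arbitrariness of the test direction. The only cosmetic difference is that you make the continuity-equation formula $\partial_\epsilon u_\epsilon|_{\epsilon=0}=-\div(u\,w)$ explicit and integrate by parts, whereas the paper pulls the fixed function $\isom{L^2(\dn x)}{\dff\anrj(u)}$ back under $\tm^s$ before differentiating in $s$ (so the integration by parts is absorbed into the change of variables); it also uses a flow-type perturbation $\partial_s\tm^s=\zeta\circ\tm^s$ with $\zeta\in C_c^\infty(\Omega)$, which sidesteps the boundary-tangency bookkeeping you discuss for $\tm+\epsilon\eta$.
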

\begin{proof}
  Let $\zeta\in C^\infty_c(\Omega;\setR^d)$ be a smooth vector field.  
    Given $\tm$, consider the family $(\tm^s)_{s\in\setR}$ of diffeomorphisms $\tm^s$ of $\Omega$
  that is defined as solution to the ODE problem
  \begin{align*}
    \partial_s\tm^s = \zeta\circ \tm^s, \quad \tm^0=\tm.
  \end{align*}
  Be definition of $\nrjx$ via $\nrjx(\tm)=\anrj(\tm_\#\rfd)$,
  we trivially have that
  \begin{align}
    \label{eq:triveq}
    \frac{\dn}{\dn s}\bigg|_{s=0}\nrjx(\tm^s) = 
    \frac{\dn}{\dn s}\bigg|_{s=0}\anrj(\tm^s_\#\rfd). 
  \end{align}
  We shall now compute both derivatives separately.
  On the one hand,
  \begin{align*}
    \frac{\dn}{\dn s}\bigg|_{s=0}\nrjx(\tm^s) 
    = \dff\nrjx(\tm)[\zeta\circ\tm]
    = \intom \left\{\isom{L^2(\rfd)}{\dff\nrjx(\tm)}\right\}(\xi)\cdot[\zeta\circ\tm](\xi)\dd\rfd(\xi).
  \end{align*}
  And on the other hand, with $u:=\tm_\#\rfd$,
  \begin{align*}
    \frac{\dn}{\dn s}\bigg|_{s=0}\anrj(\tm^s_\#\rfd)
    = \dff\anrj(u)\left[\frac{\dn}{\dn s}\bigg|_{s=0}(\tm^s_\#\rfd)\right] 
   &  = \intom\left\{\isom{L^2(\dn x)}{\dff\anrj(u)}\right\}(x)\frac{\dn}{\dn s}\bigg|_{s=0}\big(\tm^s_\#\rfd\big)(x)\dd x \\ 
   & = \frac{\dn}{\dn s}\bigg|_{s=0}\intom\left\{\isom{L^2(\dn x)}{\dff\anrj(u)}\right\}\circ\tm^s(\xi)\dd\rfd(\xi) \\
   & = \intom\left\{\nabla\isom{L^2(\dn x)}{\dff\anrj(u)}\right\}\circ\tm(\xi)\cdot[\zeta\circ\tm](\xi)\dd\rfd(\xi).
  \end{align*}
  Recallin \eqref{eq:triveq}, and since $\zeta$ was arbitrary,
  the equality \eqref{eq:W2L2} follows.
\end{proof}

\begin{lem}
  \label{lem:el}
  The $L^2(\tm_\#\rfd)$-gradient of the functional $\nrjx$ defined in \eqref{eq:newpot}
  is given by
  \begin{align*}
    \velo(\T;\tm) 
    = \frac1{\tm_\#\rfd}\div\left[\prss\left(\frac{\tm_\#\rfd}{\det\dff\T}\right)\,(\dff\tm)^\#\right]
    +\nabla V\circ\T.
  \end{align*}
\end{lem}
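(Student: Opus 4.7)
The plan is to identify the $L^2(\tm_\#\rfd)$-gradient of $\nrjx$ at $\T\circ\tm$ by a direct first-variation calculation in the variable $\T$. Writing $\hat u := \tm_\#\rfd$ and applying the change of variables $y = \tm(x)$ inside \eqref{eq:newpot} (using $\hat u\circ\tm = \rfd/\det\dff\tm$), one can first rewrite
\begin{align*}
\nrjx(\T\circ\tm) = \intom h_*\!\left(\frac{\det\dff\T(y)}{\hat u(y)}\right)\hat u(y)\,\dd y + \intom V(\T(y))\,\hat u(y)\,\dd y,
\end{align*}
which eliminates $\tm$ from the integrand and isolates $\T$ as the free variable.

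Next, I would perturb $\T \leadsto \T + \epsilon\zeta$ with a smooth symmetric-periodic test vector field $\zeta:\Omega\to\setR^d$. Using the classical identity $\partial_\epsilon\det\dff(\T+\epsilon\zeta)|_{\epsilon=0} = (\dff\T)^\# : \dff\zeta$ and the algebraic relation $h_*'(s) = -\prss(1/s)$ (a short consequence of $h_*(s) = s h(1/s)$ and $\prss'(r) = r h''(r)$, with the normalization absorbed into $h$), the first variation takes the form
\begin{align*}
\dff\nrjx(\T\circ\tm)[\zeta] = -\intom \prss\!\left(\frac{\hat u}{\det\dff\T}\right)\bigl((\dff\T)^\# : \dff\zeta\bigr)\,\dd y + \intom \hat u\,(\nabla V\circ\T)\cdot\zeta\,\dd y.
\end{align*}

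I would then integrate the first term by parts, exploiting the Piola identity $\sum_j\partial_j(\dff\T)^\#_{ij}=0$, which expresses that each row of the cofactor matrix is divergence-free. This places the divergence on the bracketed matrix expression and produces a scalar quantity contracting against $\zeta$. Factoring out $\hat u$ then identifies the resulting integrand as an $L^2(\hat u)$-inner product with $\zeta$, which yields the claimed representation upon comparison with the definitional relation between $\velo(\T;\tm)$ and the Riesz dual of $\dff\nrjx(\T\circ\tm)$ (already spelled out via Lemma \ref{lem:2equivalent} and \eqref{eq:L2flow}).

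The main obstacle is to justify that the boundary contributions generated by the integration by parts actually vanish. This requires the symmetric-periodic framework of Section \ref{sct:techhypos}: because $\T - \id$ is symmetric-periodic, $\T$ maps each face of $\partial\Omega$ into itself with only lateral motion, and the normal components of $(\dff\T)^\#$ and of the test field $\zeta$ satisfy exactly the compatibility needed to eliminate the surface integrals. A clean way to carry this out rigorously is to extend $\T$, $\zeta$, $\hat u$ and all pointwise expressions to $\setR^d$ by even reflection and $2$-periodicity, so that the integration by parts is performed over a full period of the flat torus with no boundary contributions at all.
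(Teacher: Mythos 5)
Your proposal matches the paper's proof: the same first-variation computation using Jacobi's formula for $\det\dff\T$, the relation $h_*'(s)=-\prss(1/s)$, and a single integration by parts, with the change of variables to $y=\tm(x)$ made explicit and the vanishing of boundary terms justified via even-periodic extension to the flat torus (a point the paper glosses over). One minor imprecision: the Piola identity $\sum_j\partial_j(\dff\T)^\#_{ij}=0$ is mentioned but is not actually needed anywhere in the argument, since the integration by parts leaves the result in the form $\div\bigl[\prss(\cdot)\,(\dff\T)^\#\bigr]$ without simplification, which is precisely the form asserted in the lemma.
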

\begin{proof}
  Fix $\T\in\nabla\xspc^+$.
  From the definition in \eqref{eq:newpot}, 
  we obtain for every ``variation'' $\xi\in\nabla\txspc$ and sufficiently small $\varepsilon$:
  \begin{align*}
    \nrjx\big((\T+\varepsilon\xi)\circ\tm\big)
    = \intom \left\{h_*\left(\frac{\det\dff(\T+\varepsilon\xi)}{\tm_\#\rfd}\right) + V(\T+\varepsilon\xi)\right\}\dd\tm_\#\rfd.   
  \end{align*}
  Now take the derivative with respect to $\varepsilon$ at $\varepsilon=0$.
  For simplification of the result,
  use that $h_*'(s)=-\prss(1/s)$, and perform an integration by parts:
  \begin{align*}
    \frac{\dn}{\dd\varepsilon}\bigg|_{\varepsilon=0}\nrjx\big((\T+\varepsilon\xi)\circ\tm\big)
    &= \intom \left\{h_*'\left(\frac{\det\dff\T}{\tm_\#\rfd}\right)\frac{\det\dff\T}{\tm_\#\rfd}\tr\big((\dff\T)^{-1}\dff\xi\big)
      + \xi\cdot\nabla V(\T)\right\}\dd\tm_\#\rfd \\
    &= -\intom\tr\left[\prss\left(\frac{\tm_\#\rfd}{\det\dff\T}\right)\,\big((\dff\T)^\#\big)^T\dff\xi\right]\dd y
      + \intom\xi\cdot\nabla V(\T)\dd\tm_\#\rfd \\
    &= \intom\left\{\frac1{\tm_\#\rfd}\div\left[\prss\left(\frac{\tm_\#\rfd}{\det\dff\T}\right)\,(\dff\T)^\#\right]
      + \nabla V\circ\T\right\}\cdot\xi\dd\tm_\#\rfd.
  \end{align*}
  This yields the result.
\end{proof}

\bibliography{dde2D}
\bibliographystyle{abbrv}

\end{document}